\documentclass[12pt]{amsart}

\usepackage{amssymb, amsfonts, amsmath, amscd, amsthm}
\usepackage{hyperref, eucal, enumerate, mathrsfs}
\usepackage[all]{xy}

\addtolength{\textwidth}{+4cm} \addtolength{\textheight}{+2cm}
\hoffset-2cm \voffset-1cm \setlength{\parskip}{5pt}

\newtheorem{lemma}[subsection]{Lemma}
\newtheorem{proposition}[subsection]{Proposition}
\newtheorem{theorem}[subsection]{Theorem}
\newtheorem{corollary}[subsection]{Corollary}

\theoremstyle{definition}
\newtheorem{pg}[subsection]{}
\newtheorem{definition}[subsection]{Definition}
\newtheorem{remark}[subsection]{Remark}
\newtheorem{example}[subsection]{Example}

\DeclareMathOperator{\Alg}{Alg}
\DeclareMathOperator{\Br}{Br}
\DeclareMathOperator{\CAlg}{CAlg}
\DeclareMathOperator{\cg}{cg}
\DeclareMathOperator{\cn}{cn}
\DeclareMathOperator{\DD}{D}
\DeclareMathOperator{\End}{End}
\DeclareMathOperator{\Ext}{Ext}
\DeclareMathOperator{\Fun}{Fun}
\DeclareMathOperator{\Groth}{\mathsf{Groth}}
\DeclareMathOperator{\id}{id}
\DeclareMathOperator{\LinCat}{LinCat}
\DeclareMathOperator{\LMod}{LMod}
\DeclareMathOperator{\LPres}{\mathcal{P}r^{L}}
\DeclareMathOperator{\Map}{Map}
\DeclareMathOperator{\Mod}{Mod}
\DeclareMathOperator{\Pres}{\mathcal{P}r}
\DeclareMathOperator{\PSt}{PSt}
\DeclareMathOperator{\QCoh}{QCoh}
\DeclareMathOperator{\QStk}{QStk}
\DeclareMathOperator{\rev}{rev}
\DeclareMathOperator{\RMod}{RMod}
\DeclareMathOperator{\sBr}{\mathscr{B}r}
\DeclareMathOperator{\Sp}{Sp}
\DeclareMathOperator{\Spec}{Spec}
\DeclareMathOperator{\SSet}{\mathcal{S}}
\newcommand{\Sphere}{S}
\DeclareMathOperator{\St}{St}
\DeclareMathOperator{\Z}{\mathfrak{Z}}

\DeclareMathOperator{\calA}{\mathcal{A}}
\DeclareMathOperator{\calB}{\mathcal{B}}
\DeclareMathOperator{\calC}{\mathcal{C}}
\DeclareMathOperator{\calD}{\mathcal{D}}
\DeclareMathOperator{\calF}{\mathcal{F}}
\DeclareMathOperator{\calO}{\mathcal{O}}
\DeclareMathOperator{\calQ}{\mathcal{Q}}
\DeclareMathOperator{\calX}{\mathcal{X}}

\DeclareMathOperator{\bbE}{\mathbb{E}}
\DeclareMathOperator{\bbF}{\mathbb{F}}
\DeclareMathOperator{\bbZ}{\mathbb{Z}}

\DeclareMathOperator{\sfX}{\mathsf{X}}
\DeclareMathOperator{\sfY}{\mathsf{Y}}

\newcommand{\ldual}[1]{{}^\vee{#1}}
\newcommand{\BMod}[2]{{}_{#1}\text{BMod}_{#2}}

\newcommand{\Adjoint}[4]{\xymatrix@1{#1:#2 \ar@<.4ex>[r] & #3:#4 \ar@<.4ex>[l]}}
\newcommand{\Pull}[8]{\xymatrix{#1\ar[r]^-{#5}\ar[d]^-{#6} & #2 \ar[d]^-{#7} \\ #3 \ar[r]^-{#8} & #4}}

\title{Twisted Equivalences in Spectral Algebraic Geometry}
\author{Chang-Yeon Chough}
\address{Center for Quantum Structures in Modules and Spaces, Seoul National University, Seoul 08826, Republic of Korea}
\email{chough@snu.ac.kr}

\begin{document}

\begin{abstract}
	We study twisted derived equivalences for schemes in the setting of spectral algebraic geometry. To this end, we introduce the notion of a twisted equivalence and show that a twisted equivalence for perfect spectral algebraic stacks admitting a quasi-finite presentation supplies an equivalence between the stacks, which compensate for the failure of twisted derived equivalences for non-affine schemes to provide an isomorphism of the schemes. In the case of (not necessarily connective) commutative ring spectra, we also prove a spectral analogue of Rickard's theorem, which shows that a derived equivalence of associative rings induces an isomorphism between their centers.
\end{abstract}

\setcounter{tocdepth}{1} 
\maketitle
\tableofcontents

\section{Introduction}

\begin{pg}
	Let $A$ and $B$ be commutative rings and let $\alpha$ and $\beta$ be elements of the Brauer groups of $A$ and $B$, respectively (see \cite[1.2]{MR1608798}). Suppose we are given a triangulated equivalence between the derived category of $\alpha$-twisted $A$-modules and the derived category of $\beta$-twisted $B$-modules; see \cite[1.2.1]{MR2700538}. Then there exists an isomorphism of rings $f: A \rightarrow B$ such that $f^\ast(\alpha)=\beta$ in the Brauer group of $B$, generalizing \cite[p.177]{MR1887894} of Andrei C\u{a}ld\u{a}raru (see, for example, \cite[3.5]{MR3616004}). However, Benjamin Antieau's example of a K3 surface shows that this result does not generalize to non-affine schemes; see \cite[p.11]{MR3616004}. 

	The main goal of this paper is to establish the following analogue of \cite[3.5]{MR3616004} in the setting of spectral algebraic geometry, which is valid for a large class of algebro-geometric objects, rather than merely for affine spectral Deligne-Mumford stacks of \cite[\textsection 1.4.7]{lurie2018sag} (note that we will see in \ref{twisted equivalences for Azumaya algebras in the case of nonconnective affines} that a similar statement also holds for affine nonconnective spectral Deligne-Mumford stacks):
\end{pg}

\begin{theorem}\label{twisted equivalences induce equivalences}
	Let $f:X \rightarrow Y$ be a morphism of perfect spectral algebraic stacks which admit a quasi-finite presentation. Let $\calC$ and $\calD$ be objects of the extended Brauer spaces of $X$ and $Y$, respectively. Suppose we are given a twisted equivalence $F:\QCoh(Y; \calD) \rightarrow \QCoh(X; \calC)$ between the $\infty$-categories of global sections of quasi-coherent stacks $\calC$ and $\calD$, in the sense of \emph{\ref{twisted equivalences}}. Then $f$ is an equivalence, and there exists an equivalence $f^\ast \calD \simeq \calC$ of quasi-coherent stacks on $X$. 
\end{theorem}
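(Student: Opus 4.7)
The plan is to reduce the statement to a symmetric monoidal equivalence of quasi-coherent sheaf categories and then invoke Tannakian reconstruction. Since $\calC$ and $\calD$ belong to extended Brauer spaces, the $\infty$-categories $\QCoh(X;\calC)$ and $\QCoh(Y;\calD)$ are invertible as module categories over $\QCoh(X)$ and $\QCoh(Y)$ respectively. The notion of a twisted equivalence established in \ref{twisted equivalences} should equip $F$ with $\QCoh(Y)$-linear structure, where $\QCoh(X;\calC)$ is viewed as a $\QCoh(Y)$-module by restriction of scalars along $f^\ast: \QCoh(Y) \to \QCoh(X)$.

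With this setup, the first step is to pass to internal endomorphism $\bbE_\infty$-algebras. Invertibility of $\QCoh(X;\calC)$ yields a canonical identification of its $\QCoh(X)$-linear endomorphisms with $\QCoh(X)$ as symmetric monoidal $\infty$-categories, and similarly for $\QCoh(Y;\calD)$ over $\QCoh(Y)$. Transporting along $F$ and exploiting its $\QCoh(Y)$-linearity, I would extract a symmetric monoidal equivalence $\QCoh(Y) \simeq \QCoh(X)$ that factors through $f^\ast$, thereby promoting $f^\ast$ itself to an equivalence of symmetric monoidal stable $\infty$-categories. The perfectness hypothesis is what makes the tensor-product formalism for quasi-coherent stacks well-behaved enough for these internal Hom manipulations to be valid.

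The second step is the Tannakian conclusion: under the hypotheses that $X$ and $Y$ are perfect and admit a quasi-finite presentation, a symmetric monoidal equivalence of quasi-coherent sheaf categories arising from $f^\ast$ must come from an equivalence of the underlying stacks. This should follow either from results collected earlier in the paper or from the Tannaka duality formalism of \cite{lurie2018sag}, and applying it forces $f: X \to Y$ to be an equivalence. Once $f$ is known to be an equivalence, the identification $f^\ast \calD \simeq \calC$ as quasi-coherent stacks on $X$ is obtained by transporting $\calD$ across $f$ and matching it against $\calC$ via the same linearity data that produced the reconstruction in step one.

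The main obstacle I foresee is in the first step: upgrading the endomorphism calculation to yield a genuinely symmetric monoidal equivalence, rather than merely an $\bbE_1$-algebra isomorphism or an equivalence of underlying stable $\infty$-categories, and confirming that the resulting symmetric monoidal equivalence is compatible with $f^\ast$ in the precise sense demanded by the Tannakian input. This is where the precise formulation of \emph{twisted equivalence} in \ref{twisted equivalences} and the careful handling of the coherences between the $\QCoh(X)$- and $\QCoh(Y)$-module structures on $\QCoh(X;\calC)$ will be essential. Once this is in place, both the Tannakian step and the final transport of $\calD$ should reduce to standard bookkeeping.
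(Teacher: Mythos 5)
Your overall strategy matches the paper's: force $f^\ast\colon \QCoh(Y)\to\QCoh(X)$ to be an equivalence of symmetric monoidal $\infty$-categories using the twisted-equivalence data, then apply Tannaka duality (\cite[9.2.2.1]{lurie2018sag}) to conclude $f$ is an equivalence, and finally invoke full faithfulness of the global sections functor (Theorem \ref{fully faithfulness of the global sections functor}) to get $f^\ast\calD\simeq\calC$. Your reformulation via internal endomorphism algebras is also essentially the same as the paper's manipulation of duality data: for an invertible $\QCoh(X)$-module $\mathcal M$, the canonical map $\ldual{\mathcal M}\otimes_{\QCoh(X)}\mathcal M\to\QCoh(X)$ being an equivalence is precisely what identifies the endomorphism algebra with the unit, so the two viewpoints coincide.

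However, there is a genuine gap in your first step. You assert that since $\calC$ lies in the extended Brauer space of $X$, the $\infty$-category $\QCoh(X;\calC)$ is automatically invertible as an object of $\Mod_{\QCoh(X)}(\Pres^{\St})$. This is not formal: the global sections functor $\QCoh(X;\bullet)$ is only \emph{lax} symmetric monoidal, and lax symmetric monoidal functors do not preserve invertibility. The paper's proof deduces invertibility by a concrete detour that is exactly where the quasi-finite presentation hypothesis is consumed: by \cite[1.2]{chough_brauer}, the quasi-finite presentation guarantees that $\calC$ and $\calD$ are represented by honest Azumaya algebras $\calA$ on $X$ and $\calB$ on $Y$, so that $\QCoh(X;\calC)\simeq\RMod_{\calA}(\QCoh(X))$ and $\QCoh(Y;\calD)\simeq\RMod_{\calB}(\QCoh(Y))$; then Lemma \ref{Azumaya algebras and invertibility in the global case} (a global version of \cite[11.5.3.4]{lurie2018sag}, whose proof uses the perfectness of $X$ in an essential way) gives invertibility of these module categories. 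Once invertibility is in hand, both horizontal maps in the commutative square from Definition \ref{twisted equivalences} become equivalences, and since the twisted-equivalence condition says the right vertical map is an equivalence, the left vertical map $f^\ast$ is forced to be one as well. Your proposal places the quasi-finite presentation hypothesis in the Tannaka step, but \cite[9.2.2.1]{lurie2018sag} applies to quasi-geometric stacks without that hypothesis; it is the invertibility step that needs it, and this is the content you would have to supply to make the argument complete.
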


\begin{pg} 
	Using $\bbE_\infty$-rings (in the sense of \cite[7.1.0.1]{lurie2017ha}) in place of ordinary commutative rings, Jacob Lurie set up the foundation of spectral algebraic geometry, which provides a natural framework to understand the derived $\infty$-categories of algebro-geometric objects; see \cite{lurie2018sag}. In the spectral setting of \ref{twisted equivalences induce equivalences}, we use the extended Brauer spaces of \cite[11.5.2.1]{lurie2018sag} and the $\infty$-categories of global sections of quasi-coherent stacks of \cite[\textsection 10.4.1]{lurie2018sag} (see also \cite[5.9]{chough_brauer}) in place of the Brauer groups and the derived categories of twisted modules, respectively.
\end{pg}

\begin{pg}
	The notion of a \emph{perfect spectral algebraic stack} of \ref{perfect spectral algebraic stacks} is a spectral analogue of the notion of an ordinary quasi-compact algebraic stack $\calX$ for which the diagonal map $\calX \rightarrow \calX \times \calX$ is quasi-affine, the derived category $\DD_{\mathrm{qc}}(\calX)$ of quasi-coherent sheaves on $\calX$ is compactly generated, and the structure sheaf $\calO_{\calX} \in \DD_{\mathrm{qc}}(\calX)$ is compact. If $X$ is a perfect spectral algebraic stack, we say that it admits a \emph{quasi-finite presentation} if there exists a morphism $\Spec A \rightarrow X$ which is locally quasi-finite, faithfully flat, and locally almost of finite presentation, where $A$ is a connective $\bbE_\infty$-ring (see \cite[1.3]{chough_brauer}). By virtue of \ref{examples of perfect spectral algebraic stacks}, the class of perfect spectral algebraic stacks admitting a quasi-finite presentation includes all quasi-compact spectral Deligne-Mumford stacks $\sfX$ for which the diagonal map $\sfX \rightarrow \sfX \times \sfX$ is quasi-affine and the structure sheaf $\calO_{\sfX}$ is a compact object of the $\infty$-category $\QCoh(\sfX)$ of quasi-coherent sheaves on $\sfX$ of \cite[2.2.2.1]{lurie2018sag} (see \cite[1.4.4.2]{lurie2018sag} for the definition of a spectral Deligne-Mumford stack). In particular, the class includes every quasi-compact and quasi-separated spectral algebraic space (see \cite[1.6.8.1]{lurie2018sag}). Moreover, if $R$ is an ordinary commutative ring, then the class includes the underlying quasi-geometric spectral algebraic stacks of perfect derived algebraic stacks over $R$ which admit a quasi-finite presentation (see \ref{perfect derived algebraic stacks}).
\end{pg}

\begin{remark}
	Using the linear equivalence between the ordinary categories of twisted quasi-coherent sheaves in place of the triangulated equivalence between the derived categories of twisted modules in the statement of \cite[3.5]{MR3616004}, Antieau's generalization of C\u{a}ld\u{a}raru's conjecture of \cite[p.173]{MR1887894} shows that an analogous statement holds for quasi-compact and quasi-separated schemes; see \cite[1.1]{MR3466552}. In the case where the twists are given by $\mathbb{G}_m$-gerbes, John Calabrese and Michael Groechenig proved an analogue for quasi-compact and separated algebraic spaces; see \cite[3.5]{MR3322195}. We can also regard \ref{twisted equivalences induce equivalences} as an analogue of these results in the setting of spectral algebraic geometry.
\end{remark}

\begin{example}
	Let $f:X \rightarrow Y$ be as in \ref{twisted equivalences induce equivalences}. Suppose that $F$ is representable in the sense of \cite[6.3.2.1]{lurie2018sag} and that there exists an invertible compactly generated stable quasi-coherent stack $\calC$ on $X$ (that is, $\calC$ is an object of the extended Brauer space of $X$) for which the pushforward $f_\ast \calC$ (see \cite[10.1.4.1]{lurie2018sag}) is an object the extended Brauer space of $Y$. Then the canonical equivalence $\QCoh(Y; f_\ast \calC) \rightarrow \QCoh(X; \calC)$ is a twisted equivalence (see \cite[10.1.7.4]{lurie2018sag}), so that the morphism $f$ must be an equivalence by virtue of \ref{twisted equivalences induce equivalences}. In particular, if $f: \sfX \rightarrow \sfY$ is a map of quasi-compact and quasi-separated spectral algebraic spaces for which there exists an Azumaya algebra on $\sfX$ whose pushforward along $f$ is an Azumaya algebra on $\sfY$ (see \cite[11.5.3.7]{lurie2018sag} and \cite[2.5.4.3]{lurie2018sag}), then $f$ is an equivalence.
\end{example}

\begin{remark}
	In contrast with \cite[3.5]{MR3616004} and \cite[1.1]{MR3466552} (see also \cite[3.5]{MR3322195}) in classical algebraic geometry, we impose some additional conditions in the spectral setting of \ref{twisted equivalences induce equivalences}: 
\begin{enumerate}[(i)]
\item\label{To give a morphism} We assume \emph{a priori} that a morphism $f: X \rightarrow Y$ is given.

\item\label{To equip the global sections with the actions} In the definition of a twisted equivalence of \ref{twisted equivalences}, we regard the $\infty$-category of global sections $\QCoh(X; \calC)$ not just as a presentable stable $\infty$-category (see \cite[5.5.0.1]{MR2522659} and \cite[1.1.1.9]{lurie2017ha}), but also as equipped with an action of $\QCoh(X)$ (and similarly for $\QCoh(Y; \calD)$). 

\item\label{twisted equivalences are stronger than equivalences} The condition that $F: \QCoh(Y; \calD) \rightarrow \QCoh(X; \calC)$ is a twisted equivalence is stronger than the condition that $F$ is an equivalence of stable $\infty$-categories (or even that $F$ is an equivalence of $\QCoh(Y)$-module objects of the symmetric monoidal $\infty$-category $\Pres^{\St}$ of presentable stable $\infty$-categories of \cite[4.8.2.18]{lurie2017ha}, where we regard $\QCoh(X; \calC)$ as a module over $\QCoh(Y)$ via the pullback functor $f^\ast: \QCoh(Y) \rightarrow \QCoh(X)$ of \cite[6.2.2.6]{lurie2018sag}).
\end{enumerate}
Let us give an account of the necessity of each of these conditions:
\end{remark}

\begin{pg}\label{an explanation for the actions}
	To understand condition (\ref{To equip the global sections with the actions}), let us first consider the special case of \ref{twisted equivalences induce equivalences} in which $f$ is an identity morphism on $X$. In this case, the necessity of equipping $\QCoh(X; \calC)$ and $\QCoh(X; \calD)$ with the actions of $\QCoh(X)$ follows from \ref{fully faithfulness of the global sections functor}, which asserts that if $X$ is a quasi-geometric stack satisfying condition $(\ast)$ of \ref{promotion of the global sections functors}, then a morphism $\calC \rightarrow \calD$ of stable quasi-coherent stacks on $X$ is an equivalence if and only if the induced map $\QCoh(X; \calC) \rightarrow \QCoh(X; \calD)$ is an equivalence of $\QCoh(X)$-module objects of $\Pres^{\St}$. 
\end{pg}

\begin{pg}\label{another explanation for the actions}
	As another illustration of the perspective that it is necessary to equip $\QCoh(X; \calC)$ with the action of $\QCoh(X)$, we consider the case where $X=\Spec B$ and $Y=\Spec A$ are affine spectral Deligne-Mumford stacks. Take $\calC$ and $\calD$ to be the unit objects of the $\infty$-categories $\QStk^{\St}(X)$ and $\QStk^{\St}(Y)$ of stable quasi-coherent stacks on $X$ and $Y$, respectively (see \cite[10.1.2.1]{lurie2018sag} and \cite[10.1.6.4]{lurie2018sag}). Then we can identify $\QCoh(X; \calC) \in \Mod_{\QCoh(X)}(\Pres^{\St})$ with the stable $\infty$-category $\Mod_B$ of $B$-module spectra of \cite[7.1.1.2]{lurie2017ha} equipped with the action of $\Mod_B$ on itself via the symmetric monoidal structure on $\Mod_B$ of \cite[4.5.2.1]{lurie2017ha} (and similarly for $\QCoh(Y; \calD)$). Now the condition that $F: \Mod_A \rightarrow \Mod_B$ is an $A$-linear equivalence (rather than merely an equivalence of $\infty$-categories) in the definition of a twisted equivalence of \ref{twisted equivalences} is equivalent to the condition that $F$ is a symmetric monoidal equivalence between the symmetric monoidal $\infty$-categories $\Mod_A$ and $\Mod_B$ (note that the commutativity of the diagram appearing in \ref{twisted equivalences} allows us to identify $F$ with the extension of scalars functor $\Mod_A \rightarrow \Mod_B$ associated to the morphism $f$). From this point of view, it is necessary to impose condition (\ref{To equip the global sections with the actions}), because $f$ is an equivalence if and only if $F$ is a symmetric monoidal equivalence (see \cite[7.1.2.7]{lurie2017ha}). 
\end{pg}

\begin{remark}
	In view of \ref{an explanation for the actions} and \ref{another explanation for the actions}, we need to endow the $\QCoh(X)$-module $\QCoh(X; \calC)$ in $\Pres^{\St}$ the structure of a module over $\QCoh(Y)$, so that $F: \QCoh(Y; \calD) \rightarrow \QCoh(X; \calC)$ appearing in \ref{twisted equivalences} can be viewed as a $\QCoh(Y)$-linear functor. For this, we need a morphism $\Phi: \QCoh(Y) \rightarrow \QCoh(X)$ of commutative algebra objects in the symmetric monoidal $\infty$-category $\Pres^{\St}$. In the special case where $X$ is a quasi-compact and quasi-separated spectral algebraic space, the Tannaka duality for spectral algebraic spaces of \cite[9.6.0.1]{lurie2018sag} guarantees that giving a morphism $\Phi$ is equivalent to giving a morphism of functors $X \rightarrow Y$, thereby providing justification for condition (\ref{To give a morphism}).
\end{remark}

\begin{pg}
	Before turning to condition (\ref{twisted equivalences are stronger than equivalences}), we establish a close connection between the definition of a twisted equivalence of \ref{twisted equivalences} and the center of $\bbE_1$-algebras in the sense of \cite[5.3.1.12]{lurie2017ha}. Let $k$ be a nonnegative integer. If $R$ is an $\bbE_\infty$-ring, then we say that $A$ is an \emph{$\bbE_k$-algebra over $R$} if it is an $\bbE_k$-algebra object of the $\infty$-category $\Mod_R$ (see \cite[7.1.3.5]{lurie2017ha}). The following assertion is a spectral analogue of \cite[9.2]{MR1002456} (which shows that if two ordinary associative rings are derived equivalent in the sense of \cite[6.4]{MR1002456}, then there is an isomorphism between the centers of the rings):
\end{pg}

\begin{theorem}\label{twisted equivalences in the case of nonconnective affines}
	Let $f:R' \rightarrow R$ be a morphism of $\bbE_\infty$-rings, and let $A'$ and $A$ be $\bbE_1$-algebras over $R'$ and $R$, respectively. Suppose we are given a twisted equivalence $F: \RMod_{A'} \rightarrow \RMod_A$ between the $\infty$-categories of right module spectra over $A'$ and $A$, in the sense of \emph{\ref{setup for twisted equivalences for nonconnective affines}}. Then the induced map $\Z(A') \rightarrow \Z(A)$ is an equivalence of $\bbE_2$-algebras over $R'$, where we regard $\Z(A)$ as an $\bbE_2$-algebra over $R'$ via the map $f$.
\end{theorem}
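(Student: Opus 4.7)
The plan is to express both centers as $\bbE_2$-endomorphism algebras of identity functors in suitable monoidal $\infty$-categories of linear endofunctors, and then transport this description across $F$. Recall that for an $\bbE_1$-algebra $B$ over an $\bbE_\infty$-ring $S$, Morita theory and the construction of the center (see \cite[5.3.1]{lurie2017ha} together with \cite[4.8.4]{lurie2017ha}) identify $\Z(B)$, as an $\bbE_2$-algebra over $S$, with the endomorphism algebra of the identity functor in the monoidal $\infty$-category $\Fun^L_{\Mod_S}(\RMod_B, \RMod_B)$ of $\Mod_S$-linear colimit-preserving endofunctors under composition; under Morita theory this corresponds to $\End_{\BMod{B}{B}(\Mod_S)}(B)$.

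By the definition of a twisted equivalence in \ref{setup for twisted equivalences for nonconnective affines}, the functor $F$ is an equivalence of $\Mod_{R'}$-module objects in $\Pres^{\St}$, where $\RMod_A$ is regarded as $\Mod_{R'}$-linear via $f$. Consequently, conjugation by $F$ produces a monoidal equivalence
$$\Fun^L_{\Mod_{R'}}(\RMod_{A'}, \RMod_{A'}) \simeq \Fun^L_{\Mod_{R'}}(\RMod_A, \RMod_A),$$
and passing to endomorphism $\bbE_2$-algebras of the identity functors then yields a canonical equivalence of $\bbE_2$-algebras over $R'$.

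It remains to identify the two sides with $\Z(A')$ and $\Z(A)$ respectively. The left side is $\Z(A')$ directly by the first paragraph. For the right side, I need a base-change compatibility: for $A$ an $\bbE_1$-algebra over $R$ regarded as an $\bbE_1$-algebra over $R'$ via $f$, the natural map
$$\End_{\Fun^L_{\Mod_R}(\RMod_A, \RMod_A)}(\id) \longrightarrow \End_{\Fun^L_{\Mod_{R'}}(\RMod_A, \RMod_A)}(\id)$$
should be an equivalence of $\bbE_2$-algebras over $R'$ (the left-hand side having its $R$-structure restricted via $f$). Under Morita theory this reduces to the claim that the fully faithful restriction functor $\Mod_{A\otimes_R A^{op}} \hookrightarrow \Mod_{A\otimes_{R'} A^{op}}$ induced by the counit $A\otimes_{R'} A^{op} \to A\otimes_R A^{op}$ computes the same endomorphism algebra at $A$ on either side; combined with the conjugation equivalence above, this produces the desired equivalence $\Z(A')\simeq \Z(A)$ of $\bbE_2$-algebras over $R'$.

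The main obstacle I anticipate is this last base-change step: while the underlying $R'$-module identification is essentially formal (bimodule maps $A\to A$ agree over $R$ and over $R'$), verifying that the full $\bbE_2$-structures coincide---not merely the $\bbE_1$-structures arising from composition---requires care, since the second $\bbE_1$-structure on the center arises from the Dunn/Eckmann--Hilton interaction of composition with the horizontal composition in the monoidal endofunctor category. The coherence of this identification under change of base is what keeps the argument from being purely formal, and it must also be used to match the resulting map with the specific induced map $\Z(A')\to\Z(A)$ named in the statement.
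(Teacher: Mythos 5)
Your first paragraph is fine---the Morita-theoretic identification of $\Z(B)$ with the endomorphism $\bbE_2$-algebra of the identity functor in $\Fun^L_{\Mod_S}(\RMod_B,\RMod_B)$ is equivalent to the bimodule description $\End_{\BMod{B}{B}(\Mod_S)}(B)$ used in the paper, via \cite[4.8.4.6]{lurie2017ha}. The problem is in how you decompose the rest of the argument. In the second paragraph you use only that $F$ is an equivalence of $\Mod_{R'}$-modules in $\Pres^{\St}$; this produces a monoidal equivalence of $R'$-linear endofunctor categories and hence an equivalence $\Z_{R'}(A') \simeq \Z_{R'}(A)$ of $\bbE_2$-algebras over $R'$, where the subscript indicates which base the center is formed over. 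You then attempt to finish with the base-change assertion $\Z_{R'}(A)\simeq\Z_R(A)$, treating it as a coherence issue that merely "requires care."

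That base-change assertion is false in general, and the paper's own \ref{the centers and the forgetful functors} gives a counterexample: take $R'=\Sphere$, $R=\bbF_p$, $A=\underline{R}$. Then $\pi_2\Z_R(A)=\pi_2 R\simeq 0$ while $\pi_2\Z_{R'}(A)\simeq\bbF_p$ by B\"okstedt's computation of THH, so $\Z_{R'}(A)\to\Z_R(A)$ is not an equivalence even on homotopy groups. Correspondingly, the restriction functor $\Mod_{A\otimes_R A^{\rev}}\to\Mod_{A\otimes_{R'}A^{\rev}}$ that you invoke is not fully faithful: fully faithfulness would require $A\otimes_{R'}A^{\rev}\to A\otimes_R A^{\rev}$ to be an epimorphism of $\bbE_1$-rings, which fails here. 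This gap is not incidental---it is precisely the phenomenon (condition (iii) of the introduction and \ref{the centers and the forgetful functors}) that motivates the definition of a \emph{twisted} equivalence as a condition strictly stronger than $R'$-linear equivalence. The paper's proof never isolates a base-change step; instead it applies the commutativity of the square
$$
\Pull{\Mod_{R'}}{\LMod_{A'}\otimes_{R'}\RMod_{A'}}{\Mod_R}{\LMod_A\otimes_R\RMod_A}{c_{A'}}{f^\ast}{}{c_A}
$$
from \ref{setup for twisted equivalences for nonconnective affines} directly. This commutativity forces the right vertical equivalence to carry the unit bimodule $A'\in\BMod{A'}{A'}(\Mod_{R'})$ to $A\in\BMod{A}{A}(\Mod_R)$, whence an equivalence of $\bbE_1$-endomorphism algebras over $R'$, upgraded to $\bbE_2$ by conservativity of $\Alg^{(2)}_{R'}\to\Alg_{R'}$. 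If you want to retain your two-step outline, you must extract the equivalence $\BMod{A}{A}(\Mod_{R'})\to\BMod{A}{A}(\Mod_R)$ (preserving $A$) as a \emph{consequence} of the twisted-equivalence hypothesis rather than asserting it as a formal base-change fact; as written, the proof is circular at exactly the point where the hypothesis is doing its work.
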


\begin{remark}
	Here $\Z(A)$ denotes the center of the $\bbE_1$-algebra $A$ over $R$ in the sense of \cite[5.3.1.12]{lurie2017ha}, and $\Z(A')$ is defined similarly. By virtue of \cite[5.3.1.30]{lurie2017ha} and \cite[4.4.1.28]{lurie2017ha}, the underlying $\bbE_1$-algebra over $R$ of the $\bbE_2$-algebra $\Z(A)$ over $R$ classifies endomorphisms of $A$ as an object of the $\infty$-category $\BMod{A}{A}(\Mod_R)$ of $A$-$A$-bimodule objects of $\Mod_R$ (regarded as an $\infty$-category tensored over $\Mod_R$); see \cite[4.3.1.12]{lurie2017ha}. In particular, if $R$ is the sphere spectrum, then the homotopy groups $\pi_n \Z(A)$ can be identified with the \emph{$n$th topological Hochschild cohomology groups} of the $\bbE_1$-ring $A$. 
\end{remark}

\begin{remark}
	Unlike \ref{twisted equivalences} in the affine case, we allow affine nonconnective spectral Deligne-Mumford stacks in \ref{setup for twisted equivalences for nonconnective affines}. In other words, we do not require the $\bbE_\infty$-rings appearing in \ref{setup for twisted equivalences for nonconnective affines} to be connective (see \cite[p.1201]{lurie2017ha}). In fact, \ref{setup for twisted equivalences for nonconnective affines} can be regarded as a special case of \ref{twisted equivalences}, provided that we extend the theory of quasi-coherent stacks to all functors from the $\infty$-category $\CAlg$ of $\bbE_\infty$-rings (see \cite[7.1.0.1]{lurie2017ha}) to the $\infty$-category $\widehat{\SSet}$ of spaces which are not necessarily small (see \cite[1.2.16.4]{MR2522659}). 
\end{remark}

\begin{pg}\label{the centers and the forgetful functors}
	We now turn to the issue involved in condition (\ref{twisted equivalences are stronger than equivalences}). We explain this issue in the context of \ref{twisted equivalences in the case of nonconnective affines}: let $R$ be a connective $\bbE_\infty$-ring and consider the unit map $\Sphere \rightarrow R$, where $\Sphere$ is the sphere spectrum. Let $\underline{R}$ denote the underlying $\bbE_1$-ring of $R$, so that the canonical map $\theta: \RMod_{\underline{R}} \rightarrow \RMod_R$, given on objects by the identity, is an equivalence of stable $\infty$-categories. However, the induced map $\Z(\underline{R}) \rightarrow \underline{\Z(R)}$ is generally not an equivalence of $\bbE_2$-rings, where $\underline{\Z(R)}$ denotes the underlying $\bbE_2$-ring of $\Z(R)$. To see this, we observe that we can identify $\Z(R)$ with the underlying $\bbE_2$-ring of $R$ (since $R$ is an $\bbE_\infty$-ring), and that the homotopy groups $\pi_\ast \Z(\underline{R})$ can be identified with the topological Hochschild cohomology groups $\Ext^{-\ast}_{\BMod{\underline{R}}{\underline{R}}(\Sp)}(\underline{R}, \underline{R})$ of $\underline{R}$, where $\BMod{\underline{R}}{\underline{R}}(\Sp)$ denotes the $\infty$-category of $\underline{R}$-$\underline{R}$-bimodule spectra (see \cite[4.3.1.12]{lurie2017ha}). Now let $p$ be a prime number and take $R$ to be the finite field $\bbF_p$ with $p$ element, regarded as a discrete $\bbE_\infty$-ring. In this case, the map $\Z(\underline{R}) \rightarrow \underline{\Z(R)}$ is not an equivalence, because $\pi_2 \Z(R) \simeq 0$ and $\pi_2 \Z(\underline{R}) \simeq \bbF_p$ (see \cite[1.1]{bokstedt1985thh}). We conclude that the condition that the functor $F$ appearing in \ref{twisted equivalences in the case of nonconnective affines} is an equivalence of stable $R'$-linear $\infty$-categories alone is not sufficient to deduce \ref{twisted equivalences in the case of nonconnective affines}: we will need the full strength of our assumption that $F$ is a twisted equivalence of \ref{setup for twisted equivalences for nonconnective affines}.
\end{pg}
 
\begin{pg}\textbf{Conventions}.
	We will make use of the language of $\infty$-categories and the theory of spectral algebraic geometry developed in \cite{MR2522659}, \cite{lurie2017ha}, and \cite{lurie2018sag}. We will adopt the set-theoretic convention of \cite{MR2522659}.
\end{pg}

\begin{pg}\textbf{Acknowledgements}. 
	The author is thankful to Benjamin Antieau for helpful comments. This work was supported by the National Research Foundation of Korea (NRF) grant funded by the Korean government (MSIT) (No. 2020R1A5A1016126).
\end{pg}

\section{Twisted Equivalences for Spectral Algebraic Stacks}

	Our goal in this section is to establish our main result \ref{twisted equivalences induce equivalences}. 

\begin{pg}
	We begin by introducing our principal objects of interest to which \ref{twisted equivalences induce equivalences} applies. Let $0 \leq k \leq \infty$. According to \cite[7.1.0.1]{lurie2017ha}, an \emph{$\bbE_k$-ring} is an $\bbE_k$-algebra object of the symmetric monoidal $\infty$-category $\Sp$ of spectra (see \cite[4.8.2.19]{lurie2017ha}). We will be primarily interested in the cases $k=1,2$, and $\infty$. Recall that an $\bbE_\infty$-ring $R$ is said to be \emph{connective} if its underlying spectrum is connective; see \cite[p.1201]{lurie2017ha}. Let $R$ be a connective $\bbE_\infty$-ring. We let $\CAlg^{\cn}_R$ denote the $\infty$-category of connective $\bbE_\infty$-algebras over $R$; see \cite[7.1.3.8]{lurie2017ha}. In the special case where $R$ is the sphere spectrum, we will denote $\CAlg^{\cn}_R$ simply by $\CAlg^{\cn}$. Recall from \cite[2.6]{chough_brauer} that a \emph{quasi-geometric spectral algebraic stack over $R$} is a functor $X: \CAlg^{\cn}_R \rightarrow \widehat{\SSet}$ satisfying the following conditions (here $\widehat{\SSet}$ denotes the $\infty$-category of (not necessarily small) spaces of \cite[1.2.16.4]{MR2522659}):
\begin{enumerate}[(i)]
\item The functor $X$ satisfies descent for the fpqc topology of \cite[B.6.1.3]{lurie2018sag}.
\item The diagonal map $\Delta: X \rightarrow X \times X$ is representable and quasi-affine in the sense of \cite[6.3.3.6]{lurie2018sag}.
\item There exist a connective $\bbE_\infty$-algebra $A$ over $R$ and a fiber smooth and surjective morphism $\Spec A \rightarrow X$; see \cite[11.2.5.5]{lurie2018sag}.
\end{enumerate}
\end{pg}

\begin{pg}
	To each functor $X: \CAlg^{\cn} \rightarrow \widehat{\SSet}$, we can associate an $\infty$-category $\QCoh(X)$, which we call the \emph{$\infty$-category of quasi-coherent sheaves on $X$}. More informally, an object $\calF \in \QCoh(X)$ is a rule which assigns to each point $\eta \in X(A)$ an $A$-module $\calF_\eta$, depending functorially on the pair $(A, \eta)$; see \cite[6.2.2.1]{lurie2018sag}. In a relative setting, we define an $\infty$-category of quasi-coherent sheaves as follows:
\end{pg}

\begin{definition}\label{qcoh in the relative setting}
	Let $R$ be a connective $\bbE_\infty$-ring. Let $X: \CAlg^{\cn}_R \rightarrow \widehat{\SSet}$ be a functor, and let $\overline{X}$ denote the image of $X$ under the equivalence of $\infty$-categories $\Fun(\CAlg^{\cn}_R, \widehat{\SSet}) \simeq \Fun(\CAlg^{\cn}, \widehat{\SSet})_{/\Spec R}$, where the latter $\infty$-category denotes the slice $\infty$-category. We define the \emph{$\infty$-category $\QCoh(X)$ of quasi-coherent sheaves on $X$} to be the $\infty$-category $\QCoh(\overline{X})$ of quasi-coherent sheaves on $\overline{X}$ in the sense of \cite[6.2.2.1]{lurie2018sag}. 
\end{definition}

\begin{pg}
	We now introduce the class of algebro-geometric objects that we are interested in:
\end{pg}

\begin{definition}\label{perfect spectral algebraic stacks}
	Let $R$ be a connective $\bbE_\infty$-ring. We will say that a functor $X: \CAlg^{\cn}_R \rightarrow \widehat{\SSet}$ is a \emph{perfect spectral algebraic stack over $R$} if it is a quasi-geometric spectral algebraic stack over $R$ (in the sense of \cite[2.6]{chough_brauer}) which satisfies the following conditions:
\begin{enumerate}[(i)]
\item The $\infty$-category $\QCoh(X)$ (in the sense of \ref{qcoh in the relative setting}) is compactly generated (see \cite[5.5.7.1]{MR2522659}).
\item The structure sheaf $\calO_X \in \QCoh(X)$ is compact (see \cite[5.3.4.5]{MR2522659}).
\end{enumerate}
\end{definition}

\begin{remark}\label{perfect spectral algebraic stacks over the sphere spectrum} \noindent 
\begin{enumerate}[(i)]
\item Using the notation of \ref{qcoh in the relative setting}, we see that a functor $X: \CAlg^{\cn}_R \rightarrow \widehat{\SSet}$ is a perfect spectral algebraic stack over $R$ if and only if $\overline{X}$ is a perfect spectral algebraic stack over the sphere spectrum $S$; see \cite[2.7]{chough_brauer}.

\item In the special case where $R$ is the sphere spectrum, $X$ is a perfect spectral algebraic stack (over the sphere spectrum) if and only if $X$ is both a quasi-geometric spectral algebraic stack (over the sphere spectrum) and a perfect stack in the sense of \cite[9.4.4.1]{lurie2018sag} by virtue of \cite[9.4.4.5]{lurie2018sag}. 
\end{enumerate}
\end{remark}

\begin{pg}
	There is an entirely parallel story in the setting of derived algebraic geometry: let $R$ be a discrete $\bbE_\infty$-ring (that is, an ordinary commutative ring) and let $\CAlg^\Delta_R$ denote the $\infty$-category of simplicial commutative $R$-algebras (see \cite[25.1.1.1]{lurie2018sag}). According to \cite[2.25]{chough_brauer}, a \emph{quasi-geometric derived algebraic stack over $R$} is a functor $X: \CAlg^\Delta_R \rightarrow \widehat{\SSet}$ which satisfies the following conditions:
\begin{enumerate}[(i)]
\item The functor $X$ satisfies descent for the fpqc topology of \cite[2.17]{chough_brauer}.
\item The diagonal map $\Delta: X \rightarrow X \times X$ is representable and quasi-affine.
\item There exist a simplicial commutative $R$-algebra $A$ and a smooth and surjective morphism $\Spec A \rightarrow X$.
\end{enumerate}
\end{pg}

\begin{pg}
	To each quasi-geometric derived algebraic stack $X$ over $R$, we can associate a quasi-geometric spectral algebraic stack $X^\circ$ over $R$, which we refer to as the \emph{underlying quasi-geometric spectral algebraic stack of $X$ over $R$}; see \cite[2.31]{chough_brauer}. Note that if $X=\Spec A$ is affine, then $X^\circ$ can be identified with $\Spec A^\circ$, where $A^\circ$ denotes the image of $A$ under the forgetful functor $\Theta_R: \CAlg^\Delta_R \rightarrow \CAlg^{\cn}_R$ of \cite[25.1.2.1]{lurie2018sag}.
\end{pg}

\begin{definition}\label{perfect derived algebraic stacks}
	Let $R$ be an ordinary commutative ring. We will say that a functor $X: \CAlg^\Delta_R \rightarrow \widehat{\SSet}$ is a \emph{perfect derived algebraic stack over $R$} if it is a quasi-geometric derived algebraic stack over $R$ (in the sense of \cite[2.25]{chough_brauer}) and the underlying quasi-geometric spectral algebraic stack $X^\circ$ over $R$ is perfect in the sense of \ref{perfect spectral algebraic stacks}.
\end{definition}

\begin{remark}
	The property of being a perfect derived algebraic stack can also be described as in \ref{perfect spectral algebraic stacks} by virtue of \cite[5.4]{chough_brauer}.
\end{remark}

\begin{example}\label{examples of perfect spectral algebraic stacks}
	The class of perfect spectral algebraic stacks includes many algebro-geometric objects of interest:
\begin{enumerate}[(i)]
\item Let $\sfX$ be a quasi-geometric spectral Deligne-Mumford stack of \cite[9.1.4.1]{lurie2018sag} (that is, $\sfX$ is a quasi-compact spectral Deligne-Mumford stack for which the diagonal map $\sfX \rightarrow \sfX \times \sfX$ is quasi-affine), so that $\QCoh(\sfX)$ is compactly generated by virtue of \cite[1.10]{chough_brauer}. If the structure sheaf $\calO_{\sfX} \in \QCoh(\sfX)$ is compact, then $\sfX$ can be regarded as a perfect spectral algebraic stack. In particular, it follows from \cite[9.6.1.1]{lurie2018sag} that every quasi-compact and quasi-separated spectral algebraic space (see \cite[1.6.8.1]{lurie2018sag}) is a perfect spectral algebraic stack. 

\item Let $R$ be an ordinary commutative ring. Then for each perfect derived algebraic stack over $R$, its underlying quasi-geometric spectral algebraic stack is a perfect spectral algebraic stack over $R$. In particular, if $\sfX$ is a quasi-compact derived Deligne-Mumford stack with quasi-affine diagonal, whose structure sheaf $\calO_{\sfX}$ is a compact object of $\QCoh(\sfX)$, then its underlying spectral Deligne-Mumford stack can be viewed as a perfect spectral algebraic stack (see \cite[2.18]{chough_brauer} for the definitions of a derived Deligne-Mumford stack and its underlying spectral algebraic stack).
\end{enumerate}
\end{example}

\begin{pg}
	We now give a quick review of some terminology which will be needed to define the notion of a twisted equivalence of \ref{twisted equivalences}. Let $\LPres$ denote the $\infty$-category of presentable $\infty$-categories (see \cite[5.5.3.1]{MR2522659}), which we regard as endowed with the symmetric monoidal structure of \cite[4.8.1.15]{lurie2017ha}. For each $\bbE_\infty$-ring $A$, we let $\Mod_A$ denote the $\infty$-category of $A$-module spectra of \cite[7.1.1.2]{lurie2017ha}. We regard $\Mod_A$ as equipped with the symmetric monoidal structure of \cite[7.1.3.8]{lurie2017ha}, so that it can be viewed as a commutative algebra object of the symmetric monoidal $\infty$-category $\LPres$. Let $\LinCat^{\St}_A$ denote the $\infty$-category $\Mod_{\Mod_A}(\LPres)$, which we refer to as the \emph{$\infty$-category of stable $A$-linear $\infty$-categories}; see \cite[D.1.5.1]{lurie2018sag}. If $X: \CAlg^{\cn} \rightarrow \widehat{\SSet}$ is a functor, we can associate to $X$ an \emph{$\infty$-category $\QStk^{\St}(X)$ of stable quasi-coherent stacks on $X$}. Roughly speaking, an object $\calC \in \QStk^{\St}(X)$ is a rule which assigns to each point $\eta \in X(A)$ a stable $A$-linear $\infty$-category $\calC_\eta$, which depends functorially on the pair $(A, \eta)$; see \cite[10.1.2.1]{lurie2018sag}. Note that we can regard $\QStk^{\St}(X)$ as a spectral analogue of the twisted derived category of a scheme, where the twist is given by an element of the $2$nd \'etale cohomology group of the scheme with coefficient $\mathbb{G}_m$.
\end{pg}

\begin{pg}\label{promotion of the global sections functors}
	Let us recall the definition of a global section of a quasi-coherent stack (see \cite[5.9]{chough_brauer}). For this, let $X$ be a quasi-geometric stack which satisfies the following condition:
\begin{itemize}
\item[$(\ast)$] 
There exist an affine spectral Deligne-Mumford stack $\sfX_0$ and a morphism of quasi-geometric stacks $\sfX_0 \rightarrow X$ which is faithfully flat and locally almost of finite presentation (see \cite[4.2.0.1]{lurie2018sag}).
\end{itemize}
Let $q: X \rightarrow \Spec \Sphere$ denote the projection map, where $\Sphere$ is the sphere spectrum. Let $\QStk^{\PSt}(X)$ denote the $\infty$-category of prestable quasi-coherent stacks on $X$; see \cite[10.1.2.1]{lurie2018sag}. In the special case where $X=\Spec S$, we can identify $\QStk^{\PSt}(X)$ with the $\infty$-category $\Groth_\infty$ of Grothendieck prestable $\infty$-categories of \cite[C.3.0.5]{lurie2018sag}. Using \cite[10.4.1.1]{lurie2018sag}, we see that the pullback functor $q^\ast: \Groth_\infty \rightarrow \QStk^{\PSt}(X)$ admits a right adjoint $\QCoh(X; \bullet): \QStk^{\PSt}(X) \rightarrow \Groth_\infty$ which we refer to as the \emph{global sections functor on $X$}. By virtue of \cite[5.11]{chough_brauer}, the adjunction between $q^\ast$ and $\QCoh(X; \bullet)$ restricts to a pair of adjoint functors
$$
\Adjoint{q^\ast}{\Pres^{\St}}{\QStk^{\St}(X)}{\QCoh(X; \bullet)},
$$
where $\Pres^{\St}$ denotes the $\infty$-category of presentable stable $\infty$-categories of \cite[4.8.2.18]{lurie2017ha}. Let us regard $\Pres^{\St}$ and $\QStk^{\St}(X)$ as equipped with the symmetric monoidal structures of \cite[4.8.2.18]{lurie2017ha} and \cite[10.1.6.4]{lurie2018sag}, respectively. Then $q^\ast: \Pres^{\St} \rightarrow \QStk^{\St}(X)$ is symmetric monoidal, so that the functor $\QCoh(X; \bullet): \QStk^{\St}(X) \rightarrow \Pres^{\St}$ is lax symmetric monoidal. We can therefore promote the global sections functor $\QCoh(X; \bullet)$ to a functor
$$
\QStk^{\St}(X) \simeq \Mod_{\calQ_X}(\QStk^{\St}(X)) \stackrel{\QCoh(X; \bullet)}{\longrightarrow} \Mod_{\QCoh(X; \calQ_X)}(\Pres^{\St}) \simeq \Mod_{\QCoh(X)}(\Pres^{\St}),
$$
where $\calQ_X$ denotes the unit object of the symmetric monoidal $\infty$-category $\QStk^{\St}(X)$. We will abuse notation by denoting this functor also by $\QCoh(X; \bullet)$. 
\end{pg}

\begin{pg}
	In what follows, we will use the theory of duality in the setting of monoidal $\infty$-categories developed in \cite[\textsection 4.6.1]{lurie2017ha}. We first recall a bit of terminology which will be needed later in \ref{pushforward functors and Frobenius algebra objects}. According to \cite[D.7.4.1]{lurie2018sag}, a monoidal stable $\infty$-category $\calC$ is said to be \emph{locally rigid} if it satisfies the following conditions:  
\begin{enumerate}[(i)]
\item The $\infty$-category $\calC$ is compactly generated; see \cite[5.5.7.1]{MR2522659}.
\item The tensor product on $\calC$ preserves small colimits separately in each variable.
\item The unit object $\mathbf{1}$ of $\calC$ is compact; see \cite[5.3.4.5]{MR2522659}.
\item Every compact object of $\calC$ admits both a left and a right dual (see \cite[4.6.1.1]{lurie2017ha}). 
\end{enumerate}
Now let $\calC$ be a monoidal $\infty$-category which admits geometric realizations of simplicial objects and the tensor product functor $\otimes: \calC \times \calC \rightarrow \calC$ preserves geometric realizations of simplicial objects separately in each variable, let $A$ be an associative algebra object of $\calC$, and let $\lambda: A \rightarrow \mathbf{1}$ be a morphism in $\calC$ (here $\mathbf{1}$ denotes the unit object of $\calC$). We will say that a pair $(A, \lambda)$ is a \emph{Frobenius algebra object} of $\calC$ if the composition of $\lambda$ with the multiplication map $A \otimes A \rightarrow A$ is a duality datum in $\calC$; see \cite[4.6.5.1]{lurie2017ha}.
\end{pg}

\begin{pg}
	Let $f: X \rightarrow Y$ be a morphism of quasi-geometric stacks satisfying condition $(\ast)$ of \ref{promotion of the global sections functors}. Suppose we are given a stable quasi-coherent stack $\calC$ on $X$ for which the $\infty$-category of global sections $\QCoh(X; \calC)$ is dualizable as an object of $\Mod_{\QCoh(X)}(\Pres^{\St})$; see \ref{promotion of the global sections functors}. Let $\ldual{\QCoh(X; \calC)}$ denote the dual of $\QCoh(X; \calC)$. Before introducing the notion of a twisted equivalence of \ref{twisted equivalences}, we will need to understand $\ldual{\QCoh(X; \calC)}$ as a module over $\QCoh(Y)$ (via the pullback functor $f^\ast: \QCoh(Y) \rightarrow \QCoh(X)$ of \cite[6.2.2.6]{lurie2018sag}). For this, we establish the following relative version of \cite[D.7.5.1]{lurie2018sag}:
\end{pg}

\begin{proposition}\label{pushforward functors and Frobenius algebra objects}
	Let $F: \calC^\otimes \rightarrow \calD^\otimes$ be a symmetric monoidal functor between symmetric monoidal stable $\infty$-categories $\calC^\otimes$ and $\calD^\otimes$. Suppose that the underlying $\infty$-categories $\calC$ and $\calD$ are presentable and locally rigid, and that the underlying functor $f: \calC \rightarrow \calD$ preserves small colimits. Then the functor $f$ admits a right adjoint $g$ which exhibits $\calD$ as a Frobenius algebra object of the $\infty$-category $\Mod_{\calC}(\Pres^{\St})$ of $\calC$-module objects of $\Pres^{\St}$.
\end{proposition}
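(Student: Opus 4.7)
The plan is to mirror the argument of the absolute statement \cite[D.7.5.1]{lurie2018sag}, keeping track of $\calC$-linearity throughout. The proof breaks into three parts: producing the right adjoint $g$, promoting it to a $\calC$-linear functor, and checking that the resulting trace is a Frobenius form.

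Since $f$ preserves small colimits and both $\calC$ and $\calD$ are presentable, the adjoint functor theorem \cite[5.5.2.9]{MR2522659} supplies a right adjoint $g: \calD \to \calC$, and symmetric monoidality of $F$ endows $g$ with a canonical lax symmetric monoidal structure. To upgrade $g$ to a morphism in $\Mod_\calC(\Pres^{\St})$ (where $\calD$ is viewed as a $\calC$-module via $f$), one proves the projection formula: the comparison map $c \otimes g(d) \to g(f(c) \otimes d)$ is an equivalence for all $c \in \calC$ and $d \in \calD$. Since $\calC$ is locally rigid, hence compactly generated, and both sides preserve colimits in $c$ separately, it suffices to treat compact $c$. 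For such $c$, local rigidity of $\calC$ provides a dual $c^\vee$, and symmetric monoidality of $f$ gives $f(c^\vee) \simeq f(c)^\vee$ in $\calD$. The projection formula then follows from a short chain of adjunction/duality identifications of mapping spaces. Consequently $(f,g)$ lifts to an adjunction in $\Mod_\calC(\Pres^{\St})$, and the symmetric monoidal structure on $F$ simultaneously endows $\calD$ with the structure of an associative (in fact, commutative) algebra object of this category.

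Taking $\lambda := g : \calD \to \calC$ as the candidate Frobenius form, it remains to verify that the composite
\[
e \; := \; g \circ \mu_\calD \; : \; \calD \otimes_\calC \calD \longrightarrow \calD \longrightarrow \calC
\]
is a duality datum in $\Mod_\calC(\Pres^{\St})$. I would construct the candidate coevaluation $\eta : \calC \to \calD \otimes_\calC \calD$ as the composite of $f: \calC \to \calD$ with the $\calC$-linear diagonal $\calD \to \calD \otimes_\calC \calD$ obtained from the commutative algebra structure of $\calD$. The zigzag identities for $(\eta, e)$ then follow from a diagram chase combining the algebra identities of $\calD$ with the unit and counit of the $(f,g)$-adjunction; equivalently, because compact objects of $\calD$ admit duals and generate under colimits, the identities can be checked pointwise on compact objects, where they reduce via duality to the zigzag identities supplied by the absolute case \cite[D.7.5.1]{lurie2018sag} applied to $\calD$.

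The main obstacle is expected to be the projection formula: this is the step that elevates the adjunction to the $\calC$-linear setting and makes the absolute duality theory available. Once it is in place, the Frobenius identities follow fairly directly from local rigidity together with the algebra and adjunction identities.
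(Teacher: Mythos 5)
Your first two paragraphs (existence of $g$ via the adjoint functor theorem, and the projection formula making $g$ a genuine morphism in $\Mod_\calC(\Pres^{\St})$ rather than merely a lax one) are correct and in fact spell out a $\calC$-linearity point that the paper's own proof leaves implicit; the compact-generation-plus-dualizability argument you sketch is the standard one and it works. The real trouble is in the last paragraph.

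Your candidate coevaluation $\eta$ does not exist as described. The commutative algebra structure on $\calD$ in $\Mod_\calC(\Pres^{\St})$ gives you a multiplication $\mu:\calD\otimes_\calC\calD\to\calD$ and a unit $\calC\to\calD$, but no ``$\calC$-linear diagonal'' $\calD\to\calD\otimes_\calC\calD$. What you presumably have in mind is the right adjoint $\mu^R$, but: (a) a priori $\mu^R$ is only \emph{lax} $\calC$-linear, so promoting it to a morphism in $\Mod_\calC(\Pres^{\St})$ needs a second projection-formula argument, and (b) the resulting comultiplication is precisely the Frobenius datum you are trying to establish, so taking it as input is circular unless you independently verify the triangle identities. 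On that second point, the suggestion to check the zigzag identities ``pointwise on compact objects, where they reduce via duality to the absolute case $[$D.7.5.1$]$'' is not a proof: the identities live in $\Mod_\calC(\Pres^{\St})$ and involve the \emph{relative} tensor products $\calD\otimes_\calC\calD\otimes_\calC\calD$, and it is not clear how a compact object of $\calD$ would let you replace these with the absolute ones used in D.7.5.1. Something substantive is missing here.

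The paper avoids this entirely. Instead of building a coevaluation for $e$ by hand, it first invokes \cite[D.7.7.4]{lurie2018sag} to get \emph{some} dual $\calD^\vee$ of $\calD$ in $\Mod_\calC(\Pres^{\St})$ with a duality datum $\overline{e}$, observes that the universal property of $\overline{e}$ produces a comparison map $u:\calD\to\calD^\vee$ through which $e$ factors, and then reduces the claim to showing $u$ is an equivalence. That last step is done by composing with the nondegenerate trace $\lambda_\calC:\calC\to\Sp$ (nondegeneracy being exactly the absolute case D.7.5.1 for $\calC$), using \cite[4.6.5.14]{lurie2017ha} to see that the composite $\calD^\vee\otimes\calD\to\Sp$ is a duality datum in $\Pres^{\St}$, and then matching it with the absolute duality datum for $\calD$ (D.7.5.1 for $\calD$ and the identity $\lambda_\calD\simeq\lambda_\calC\circ g$). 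This is a genuinely different, and cleaner, route: you never have to produce a coevaluation or verify triangle identities directly, only check a single map is an equivalence by descending to $\Pres^{\St}$. I'd recommend replacing your third paragraph with an argument along those lines.
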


\begin{proof} 
	Since the $\infty$-categories $\calC$ and $\calD$ are presentable and the functor $f$ preserves small colimits, the existence of a right adjoint $g$ follows from the adjoint functor theorem of \cite[5.5.2.9]{MR2522659}. Let us regard $f: \calC \rightarrow \calD$ as a morphism between commutative algebra objects of $\Pres^{\St}$, so that $\calD$ can be viewed as a commutative algebra object of $\Mod_{\calC}(\Pres^{\St})$. We wish to show that the canonical map
$$
e: \calD \otimes_{\calC}\calD \stackrel{\otimes}{\longrightarrow} \calD \stackrel{g}{\longrightarrow} \calC
$$
is a duality datum in $\Mod_{\calC}(\Pres^{\St})$. Using \cite[D.7.7.4]{lurie2018sag} and our assumption that $\calC$ is locally rigid and stable and that $\calD$ is compactly generated (see also \cite[21.1.2.3]{lurie2018sag}), we deduce that $\calD$ admits a dual $\calD^\vee$ in $\Mod_{\calC}(\Pres^{\St})$. Let $\overline{e}: \calD^\vee \otimes_{\calC} \calD \rightarrow \calC$ be a duality datum. Then the universal property of $\overline{e}$ guarantees that there exists a morphism $u:\calD \rightarrow \calD^\vee$ in $\Mod_{\calC}(\Pres^{\St})$ for which the canonical map $e$ factors as a composition
$$
\calD \otimes_{\calC}\calD \stackrel{u\otimes \id}{\longrightarrow} \calD^\vee \otimes_{\calC} \calD \stackrel{\overline{e}}{\longrightarrow} \calC.
$$
It will therefore suffice to show that $u$ is an equivalence. Let $\lambda_{\calC}: \calC \rightarrow \Sp$ and $\lambda_{\calD}: \calD \rightarrow \Sp$ denote the functors given by $\lambda_{\calC}(C)=\underline{\Map}_{\calC}({\bf 1_{\calC}}, C)$ and $\lambda_{\calD}(D)=\underline{\Map}_{\calD}({\bf 1_{\calD}}, D)$, respectively; here we regard the stable $\infty$-category $\calC$ as enriched over the symmetric monoidal $\infty$-category $\Sp$ of \cite[4.8.2.19]{lurie2017ha}, so that for each object $C\in \calC$, there exists a mapping object $\underline{\Map}_{\calC}({\bf 1_{\calC}}, C) \in \Sp$, which is characterized by the following universal property: for every object $X \in \Sp$, we have a canonical homotopy equivalence $\Map_{\Sp}(X, \underline{\Map}_{\calC}({\bf 1_{\calC}}, C)) \simeq \Map_{\calC}(X\otimes {\bf 1_{\calC}}, C)$ (and similarly for $\calD$). Using \cite[D.7.5.1]{lurie2018sag} and invoking again our assumption that $\calC$ is locally rigid, we see that $\lambda_{\calC}: \calC \rightarrow \Sp$ is a nondegenerate map in the sense of \cite[4.6.5.1]{lurie2017ha} (that is, the composition of $\lambda_{\calC}$ with the multiplication map $\calC \otimes \calC \rightarrow \calC$ is a duality datum in $\Pres^{\St}$). It then follows from \cite[4.6.5.14]{lurie2017ha} that the composite map
$$
\calD^\vee \otimes \calD \longrightarrow \calD^\vee \otimes_{\calC} \calD \stackrel{\overline{e}}{\longrightarrow} \calC \stackrel{\lambda_{\calC}}{\longrightarrow} \Sp
$$
is a duality datum in $\Pres^{\St}$. Note that the canonical map $\lambda_{\calD} \rightarrow \lambda_{\calC} \circ g$ is an equivalence of functors, so that the composition of this duality datum with the map $u\otimes \id: \calD \otimes \calD \rightarrow \calD^\vee \otimes \calD$ can be identified with the map
$$
\calD \otimes \calD \stackrel{\otimes}{\longrightarrow} \calD \stackrel{\lambda_{\calD}}{\longrightarrow} \Sp.
$$
This map is a duality datum in $\Pres^{\St}$ by virtue of \cite[D.7.5.1]{lurie2018sag} and our assumption that $\calD$ is locally rigid, so that $u$ is an equivalence as desired. 
\end{proof}

\begin{example}\label{examples for pushforward functors and Frobenius algebra objects}
	The hypotheses of \ref{pushforward functors and Frobenius algebra objects} is satisfied in the situations of interest to us:
\begin{enumerate}[(i)]
\item The pullback functor $f^\ast: \QCoh(Y) \rightarrow \QCoh(X)$ associated to a morphism $f:X \rightarrow Y$ of perfect stacks. This follows from \cite[6.2.3.4]{lurie2018sag}, \cite[\textsection 6.2.6]{lurie2018sag}, \cite[9.1.3.2]{lurie2018sag}, and \cite[9.4.4.5]{lurie2018sag}. Note that every compact object of $\QCoh(X)$ is dualizable by virtue of \cite[9.1.5.2]{lurie2018sag} and \cite[6.2.6.2]{lurie2018sag} (and similarly for $\QCoh(Y)$).

\item The pullback functor $f^\ast: \QCoh(\sfY) \rightarrow \QCoh(\sfX)$ associated to a morphism $f:\sfX \rightarrow \sfY$ of nonconnective spectral Deligne-Mumford stacks (see \cite[1.4.4.2]{lurie2018sag}) for which the $\infty$-categories $\QCoh(\sfX)$ and $\QCoh(\sfY)$ are locally rigid (see \cite[D.7.4.1]{lurie2018sag}). This is a consequence of \cite[2.2.4.1]{lurie2018sag}, \cite[2.2.4.2]{lurie2018sag} and \cite[2.5.0.2]{lurie2018sag}.
\end{enumerate}
\end{example}

\begin{corollary}\label{restriction of a duality datum} 
	Let $F: \calC^\otimes \rightarrow \calD^\otimes$ be as in \emph{\ref{pushforward functors and Frobenius algebra objects}}. Suppose we are given a duality datum $e:\mathcal{M} \otimes_{\calD} \mathcal{N} \rightarrow \calD$ in the symmetric monoidal $\infty$-category $\Mod_{\calD}(\Pres^{\St})$. Then the composite functor
$$
\mathcal{M} \otimes_{\calC} \mathcal{N} \longrightarrow \mathcal{M} \otimes_{\calD} \mathcal{N} \stackrel{e}{\longrightarrow} \calD \stackrel{g}{\longrightarrow} \calC
$$
exhibits $\mathcal{M}$ as a dual of $\mathcal{N}$ in $\Mod_{\calC}(\Pres^{\St})$.
\end{corollary}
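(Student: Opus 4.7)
The plan is to follow the template of the proof of \ref{pushforward functors and Frobenius algebra objects}, replacing $\calD$ with $\mathcal{N}$ in the appropriate places. The first step is to construct a dual $\mathcal{N}^\vee$ of $\mathcal{N}$ in $\Mod_{\calC}(\Pres^{\St})$: since $e$ exhibits $\mathcal{N}$ as dualizable in $\Mod_{\calD}(\Pres^{\St})$, \cite[D.7.7.4]{lurie2018sag} together with the local rigidity of $\calD$ implies that $\mathcal{N}$ is compactly generated as a stable $\infty$-category. Because this property depends only on the underlying $\infty$-category, $\mathcal{N}$ remains compactly generated when regarded as a $\calC$-module, and a second application of \cite[D.7.7.4]{lurie2018sag}, now using the local rigidity of $\calC$, yields the desired dual $\mathcal{N}^\vee$ together with a duality datum $\overline{e}: \mathcal{N}^\vee \otimes_{\calC} \mathcal{N} \rightarrow \calC$.

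The universal property of $\overline{e}$ then supplies a unique morphism $u: \mathcal{M} \rightarrow \mathcal{N}^\vee$ in $\Mod_{\calC}(\Pres^{\St})$ for which the composite $e'$ displayed in the statement factors as
$$
\mathcal{M} \otimes_{\calC} \mathcal{N} \xrightarrow{u \otimes \id} \mathcal{N}^\vee \otimes_{\calC} \mathcal{N} \xrightarrow{\overline{e}} \calC,
$$
so it suffices to prove $u$ is an equivalence. Following the argument in the proof of \ref{pushforward functors and Frobenius algebra objects}, let $\lambda_{\calC}: \calC \rightarrow \Sp$ and $\lambda_{\calD}: \calD \rightarrow \Sp$ denote the enriched mapping spectra out of the respective units; both are nondegenerate by \cite[D.7.5.1]{lurie2018sag} and the local rigidity hypotheses. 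Applying \cite[4.6.5.14]{lurie2017ha} twice, I obtain two duality data in $\Pres^{\St}$: one given by the composite of $\overline{e}$ with $\lambda_{\calC}$, and the other by the composite of $e$ with $\lambda_{\calD}$.

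Finally, the equivalence $\lambda_{\calD} \simeq \lambda_{\calC} \circ g$ established in the proof of \ref{pushforward functors and Frobenius algebra objects} identifies the second duality datum with the composite $\mathcal{M} \otimes \mathcal{N} \rightarrow \mathcal{M} \otimes_{\calC} \mathcal{N} \xrightarrow{e'} \calC \xrightarrow{\lambda_{\calC}} \Sp$, which via the factorization $e' = \overline{e} \circ (u \otimes \id)$ coincides with the first duality datum precomposed with $u \otimes \id: \mathcal{M} \otimes \mathcal{N} \rightarrow \mathcal{N}^\vee \otimes \mathcal{N}$. Uniqueness of duality data up to equivalence then forces $u$ to be an equivalence in $\Pres^{\St}$, and since equivalences in $\Mod_{\calC}(\Pres^{\St})$ are detected at the level of underlying $\infty$-categories, $u$ is an equivalence there as well. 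I expect the main obstacle to be the bookkeeping needed to verify that the two $\Pres^{\St}$ duality data coincide after precomposition by $u \otimes \id$ as claimed, but each required identification is a routine consequence of the factorization of $e'$ through $\overline{e}$ together with the coincidence $\lambda_{\calD} \simeq \lambda_{\calC} \circ g$.
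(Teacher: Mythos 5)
Your proof re-derives the corollary from scratch by repeating the argument that proves Proposition \ref{pushforward functors and Frobenius algebra objects}, rather than simply \emph{citing} its conclusion. The paper's own proof is a two-line application: the proposition says precisely that $g:\calD\to\calC$ exhibits $\calD$ as a Frobenius algebra object of $\Mod_{\calC}(\Pres^{\St})$, i.e.\ that $g$ is a nondegenerate map in the sense of \cite[4.6.5.1]{lurie2017ha}. Once this is known, \cite[4.6.5.14]{lurie2017ha} says directly that composing the relative duality datum $e$ with a nondegenerate map produces a duality datum over the smaller base, which is exactly the displayed composite. No construction of an auxiliary dual $\mathcal{N}^\vee$, no passage down to $\Sp$, and no comparison of two duality data in $\Pres^{\St}$ is needed.

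Beyond being circuitous, your argument has a genuine gap at the very first step. You assert that, because $\mathcal{N}$ is dualizable in $\Mod_{\calD}(\Pres^{\St})$, local rigidity of $\calD$ together with \cite[D.7.7.4]{lurie2018sag} forces $\mathcal{N}$ to be compactly generated. This reverses the implication: that result (as used in the proof of \ref{pushforward functors and Frobenius algebra objects}) deduces dualizability of a $\calC$-module \emph{from} compact generation, not the other way around. The converse is false in general even over $\Sp$ itself, which is locally rigid: $K(n)$-local spectra form a dualizable $\Sp$-module in $\Pres^{\St}$ that is not compactly generated. Since the corollary places no compact-generation hypothesis on $\mathcal{M}$ or $\mathcal{N}$ --- it only assumes a duality datum $e$ exists --- you cannot produce $\mathcal{N}^\vee$ and $\overline{e}$ the way you propose, and the remainder of the argument (the factorization through $u$, the two applications of \cite[4.6.5.14]{lurie2017ha}, and the comparison via $\lambda_{\calD}\simeq\lambda_{\calC}\circ g$) has nothing to stand on. The fix is to abandon the attempt to build $\mathcal{N}^\vee$ independently and instead invoke the nondegeneracy of $g$ supplied by \ref{pushforward functors and Frobenius algebra objects} together with \cite[4.6.5.14]{lurie2017ha}, which directly identifies the restricted $\mathcal{M}$ as the $\calC$-dual of $\mathcal{N}$ via the displayed composite.
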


\begin{proof}
	By virtue of \ref{pushforward functors and Frobenius algebra objects}, the right adjoint $g: \calD \rightarrow \calC$ is nondegenerate (see \cite[4.6.5.1]{lurie2017ha}) as a morphism in $\Mod_{\calC}(\Pres^{\St})$, so that the desired result follows from \cite[4.6.5.14]{lurie2017ha}. \end{proof}

\begin{pg}
	We are now ready to give the definition of a twisted equivalence: 
\end{pg}

\begin{definition}\label{twisted equivalences}
	Let $f:X \rightarrow Y$ be a morphism of quasi-geometric stacks satisfying condition $(\ast)$ of \ref{promotion of the global sections functors} for which the pullback functor $f^\ast: \QCoh(Y) \rightarrow \QCoh(X)$ satisfies the hypotheses of \ref{pushforward functors and Frobenius algebra objects}. Let $\calC$ and $\calD$ be stable quasi-coherent stacks on $X$ and $Y$ such that the $\infty$-categories of global sections $\QCoh(X; \calC)$ and $\QCoh(Y; \calD)$ are dualizable as modules over $\QCoh(X)$ and $\QCoh(Y)$ in $\Pres^{\St}$, respectively. We let $\ldual{\QCoh(X; \calC)}$ and $\ldual{\QCoh(Y; \calD)}$ denote the duals of $\QCoh(X; \calC)$ and $\QCoh(Y; \calD)$, respectively. Let $F: \QCoh(Y; \calD) \rightarrow \QCoh(X; \calC)$ be an equivalence of $\QCoh(Y)$-module objects of $\Pres^{\St}$ (where we regard $\QCoh(X; \calC)$ as a module over $\QCoh(Y)$ via the pullback functor $f^\ast$), so that \ref{restriction of a duality datum} supplies an equivalence
$$
\ldual{\QCoh(Y; \calD)} \otimes_{\QCoh(Y)} \QCoh(Y; \calD) \rightarrow \ldual{\QCoh(X; \calC)} \otimes_{\QCoh(Y)} \QCoh(X; \calC)
$$  
in $\Mod_{\QCoh(Y)}(\Pres^{\St})$; here we regard $\ldual{\QCoh(X; \calC)}$ as a $\QCoh(Y)$-module via $f^\ast$. We will say that $F$ is a \emph{twisted equivalence} if the composition of this equivalence with the canonical map of relative tensor products
$$
\ldual{\QCoh(X; \calC)} \otimes_{\QCoh(Y)} \QCoh(X; \calC) \rightarrow \ldual{\QCoh(X; \calC)} \otimes_{\QCoh(X)} \QCoh(X; \calC)
$$
formed in $\Pres^{\St}$ is an equivalence of presentable stable $\infty$-categories, which fits into a commutative diagram
$$
\Pull{\QCoh(Y)}{\ldual{\QCoh(Y; \calD)} \otimes_{\QCoh(Y)} \QCoh(Y; \calD)}{\QCoh(X)}{\ldual{\QCoh(X; \calC)} \otimes_{\QCoh(X)} \QCoh(X; \calC),}{}{f^\ast}{}{}
$$
where the horizontal maps are given by duality data in $\Mod_{\QCoh(Y)}(\Pres^{\St})$ and $\Mod_{\QCoh(X)}(\Pres^{\St})$, respectively. 
\end{definition}

\begin{remark}
	In the situation of \ref{twisted equivalences}, the condition that $F: \QCoh(Y; \calD) \rightarrow \QCoh(X; \calC)$ is a twisted equivalence does not depend on the choice of duality data appearing in the above diagram; see \cite[4.6.1.9]{lurie2017ha}. 
\end{remark}

\begin{pg}
	According to \cite[10.2.0.1]{lurie2018sag}, if $\sfX$ is a quasi-geometric spectral Deligne-Mumford stack, then the global sections functor $\QCoh(\sfX; \bullet): \QStk^{\PSt}(\sfX) \rightarrow \Mod_{\QCoh(\sfX)^{\cn}}(\Groth_\infty)$ is fully faithful (here $\QCoh(\sfX)^{\cn}$ denotes the $\infty$-category of connective quasi-coherent sheaves on $\sfX$ of \cite[2.2.5.3]{lurie2018sag}). We will need the following analogue for quasi-geometric stacks in the stable setting, which can be regarded as the special case of \ref{twisted equivalences induce equivalences} where $X$ is a perfect spectral algebraic stack and $f$ is the identity on $X$. Nevertheless, we give a proof of \ref{fully faithfulness of the global sections functor} first, since it will be needed in the proof of \ref{twisted equivalences induce equivalences}.
\end{pg}

\begin{theorem}\label{fully faithfulness of the global sections functor} 
	Let $X$ be a quasi-geometric stack which satisfies condition $(\ast)$ of \emph{\ref{promotion of the global sections functors}}. Then the global sections functor
$$
\QCoh(X; \bullet): \QStk^{\St}(X) \rightarrow \Mod_{\QCoh(X)}(\Pres^{\St})
$$
appearing in \emph{\ref{promotion of the global sections functors}} is fully faithful.
\end{theorem}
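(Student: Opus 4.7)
The plan is to reduce to the affine case by faithfully flat descent. In the base case $X=\Spec R$, $\QStk^{\St}(X)$ is canonically equivalent to $\LinCat^{\St}_R \simeq \Mod_{\Mod_R}(\Pres^{\St}) = \Mod_{\QCoh(X)}(\Pres^{\St})$ (using that $\Mod_R$ is stable, so modules over it in $\LPres$ are automatically stable), and under these identifications the promoted functor of \ref{promotion of the global sections functors} is essentially the identity; the statement is therefore automatic.

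For general $X$, I fix the cover $p:\sfX_0=\Spec A \to X$ supplied by condition $(\ast)$ and form its \v{C}ech nerve $\sfX_\bullet$. Since the diagonal of $X$ is quasi-affine, each $\sfX_n=\sfX_0\times_X\cdots\times_X\sfX_0$ is a quasi-affine spectral Deligne-Mumford stack. Two parallel applications of fpqc descent yield equivalences
\begin{align*}
\QStk^{\St}(X) &\simeq \lim_{[n]\in\Delta} \QStk^{\St}(\sfX_n), \\
\Mod_{\QCoh(X)}(\Pres^{\St}) &\simeq \lim_{[n]\in\Delta} \Mod_{\QCoh(\sfX_n)}(\Pres^{\St}),
\end{align*}
the first being fpqc descent for stable quasi-coherent stacks, and the second obtained by first realising $\QCoh(X)$ as $\lim_\bullet \QCoh(\sfX_\bullet)$ in commutative algebra objects of $\Pres^{\St}$ and then passing to modules. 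The construction in \ref{promotion of the global sections functors} is natural in pullback along $p$ and along the face and degeneracy maps of $\sfX_\bullet$, so it identifies the promoted $\QCoh(X;\bullet)$ with the limit of the functors $\QCoh(\sfX_n;\bullet)$. Since a limit of fully faithful functors is fully faithful, it suffices to treat each $\sfX_n$; covering each quasi-affine $\sfX_n$ by an affine and iterating the same descent argument reduces the problem to the affine base case handled above.

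The main technical obstacle will be the descent equivalence for the right-hand side $\Mod_{\QCoh(-)}(\Pres^{\St})$, together with the verification that the promoted global sections functor is natural in $X$ in the symmetric-monoidal sense required. This coherence is not purely formal: it requires upgrading the descent of $\QCoh$ as a commutative algebra in $\Pres^{\St}$ to a statement about module categories, and showing that the functors $\QCoh(\sfX_n;\bullet)$ assemble into a morphism of cosimplicial objects whose totalisation recovers the promoted $\QCoh(X;\bullet)$. Once this bookkeeping is arranged, the rest of the argument proceeds formally.
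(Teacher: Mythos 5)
Your reduction to the affine case is sound (modulo the slight imprecision that $\QStk^{\St}(\Spec R)\simeq\LinCat^{\St}_R$ and the promoted functor is essentially the identity there), and the descent equivalence for the left-hand side, $\QStk^{\St}(X)\simeq\lim_\Delta\QStk^{\St}(\sfX_\bullet)$, is indeed available (it is essentially how stable quasi-coherent stacks are defined). The genuine gap is the descent equivalence you assert for the right-hand side,
$$
\Mod_{\QCoh(X)}(\Pres^{\St}) \simeq \lim_{[n]\in\Delta}\Mod_{\QCoh(\sfX_n)}(\Pres^{\St}),
$$
which you characterize as a coherence/bookkeeping matter. It is not. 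The passage from the limit description $\QCoh(X)\simeq\lim_\Delta\QCoh(\sfX_\bullet)$ in $\CAlg(\Pres^{\St})$ to a limit description of module categories is a nontrivial descent statement for module $\infty$-categories over a non-affine base, and it does not follow formally. Worse: if it did hold, your argument would immediately upgrade the theorem to an equivalence rather than mere fully faithfulness (both sides would compute to the same cosimplicial limit, and the promoted global sections functor would be the induced map of totalizations of a levelwise equivalence). But neither this paper nor the result it models, \cite[10.2.0.1]{lurie2018sag}, claims essential surjectivity — precisely because $\QCoh(X)$-modules in $\Pres^{\St}$ need not descend along the \v{C}ech nerve of a cover. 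This is the phenomenon studied under the name of $1$-affineness, and it fails in general. So the descent claim you need is exactly as hard as (in fact, strictly stronger than) what you are trying to prove.

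The paper's proof avoids descent of module categories entirely. It constructs an explicit \emph{left} adjoint $\Psi_X:\Mod_{\QCoh(X)}(\Pres^{\St})\to\QStk^{\St}(X)$ by the pointwise formula $\Psi_X(\calD)_\eta=\Mod_A\otimes_{\QCoh(X)}\calD$ for $\eta\in X(A)$, identifies it using $\Psi_X(\calD)\simeq\calQ_X\otimes_{q^\ast q_\ast\calQ_X}q^\ast\calD$, and then verifies that the counit $\Psi_X(\QCoh(X;\calC))\to\calC$ is an equivalence by checking at each affine point $\eta:\Spec A\to X$ that the comparison $\Mod_A\otimes_{\QCoh(X)}\QCoh(X;\calC)\to\QCoh(\Spec A;\calC_\eta)$ is an equivalence, which is exactly the content of \cite[5.27]{chough_brauer} (using that $\eta$ is quasi-affine since $X$ is quasi-geometric). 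This yields fully faithfulness via the counit criterion for a right adjoint, without needing the spurious descent statement. Your proposal should either be abandoned in favor of this left-adjoint strategy, or you would need to supply a proof of $1$-affineness under condition~$(\ast)$, which would be a considerably stronger result and is not established (or asserted) in the paper.
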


\begin{proof} 
	We proceed as in the proof of \cite[10.2.0.1]{lurie2018sag}. Let $\calD \in \Mod_{\QCoh(X)}(\Pres^{\St})$ be an object. For each connective $\bbE_\infty$-ring $A$ and each point $\eta \in X(A)$, let $\calD_\eta$ denote the relative tensor product $\Mod_A \otimes_{\QCoh(X)} \calD$ formed in the $\infty$-category $\Pres^{\St}$. Then the construction $\calD \mapsto \{\calD_\eta\}$ determines a functor $\Psi_X: \Mod_{\QCoh(X)}(\Pres^{\St}) \rightarrow \QStk^{\St}(X)$. Using the equivalence $\Psi_X(\calD) \simeq \calQ_X \otimes_{q^\ast q_\ast \calQ_X} q^\ast \calD$ (where $\calQ_X$ is the unit object of $\QStk^{\St}(X)$), we see that the functor $\Psi_X$ is left adjoint to the global sections functor $\QCoh(X; \bullet)$. It will therefore suffice to show that for every stable quasi-coherent stack $\calC$ on $X$, the counit map $v: \Psi_X(\QCoh(X; \calC)) \rightarrow \calC$ is an equivalence of stable quasi-coherent stacks on $X$. We will prove this by showing that for each $A$-valued point $\eta: \Spec A \rightarrow X$, the pullback $\eta^\ast(v): \Mod_A \otimes_{\QCoh(X)}\QCoh(X; \calC) \rightarrow \calC_\eta$ is an equivalence in $\LinCat^{\St}_A$ of \cite[D.1.5.1]{lurie2018sag}. We note that the underlying map of presentable stable $\infty$-categories can be identified with the canonical map $\Mod_A \otimes_{\QCoh(X)}\QCoh(X; \calC) \rightarrow \QCoh(\Spec A; \calC_\eta)$. The desired result now follows from \cite[5.27]{chough_brauer}, which guarantees that this map is an equivalence (note that the map of quasi-geometric stacks $\eta: \Spec A \rightarrow X$ is quasi-affine because $X$ is quasi-geometric). 
\end{proof}

\begin{pg}
	Before giving the proof of \ref{twisted equivalences induce equivalences}, we need to recall a bit of terminology. Let $R$ be a connective $\bbE_\infty$-ring and let $A$ be an $\bbE_1$-algebra over $R$ (see \cite[7.1.3.5]{lurie2017ha}). Recall from \cite[11.5.3.1]{lurie2018sag} that $A$ is said to be an \emph{Azumaya algebra over R} if the underlying $R$-module of $A$ is a compact generator of $\Mod_R$ and the map $A\otimes_R A^{\rev} \rightarrow \End_R(A)$ induced by the left and right actions of $A$ on itself is an equivalence, where $A^{\rev}$ denotes the opposite algebra of $A$ (see \cite[4.1.1.7]{lurie2017ha}). According to \cite[11.5.3.4]{lurie2018sag}, $A$ is an Azumaya algebra over $R$ if and only if $\RMod_A$ is invertible as an object of $\LinCat^{\St}_R$. 
\end{pg}

\begin{remark}\label{Azumaya algebras and invertibility in the nonconnective case}
	The definition of \cite[11.5.3.1]{lurie2018sag} and the proof of \cite[11.5.3.4]{lurie2018sag} are valid also when $R$ is not connective (see also \cite[3.13]{MR3190610} and \cite[3.15]{MR3190610}), so that the hypothesis that $R$ is connective is not necessary in the statement of \cite[11.5.3.4]{lurie2018sag}. 
\end{remark}

\begin{pg}
	Let $X: \CAlg^{\cn} \rightarrow \widehat{\SSet}$ be a functor. Let $\QStk^{\cg}(X)$ denote the $\infty$-category of compactly generated stable quasi-coherent stacks on $X$, which we regard as a symmetric monoidal $\infty$-category; see \cite[10.3.1.3]{lurie2018sag} and \cite[11.4.0.1]{lurie2018sag}. Recall from \cite[11.5.2.1]{lurie2018sag} that the \emph{extended Brauer space $\sBr^\dagger(X)$} of $X$ is defined to be the full subcategory of $\QStk^{\cg}(X)^\simeq$ spanned by the invertible objects of the symmetric monoidal $\infty$-category $\QStk^{\cg}(X)$, where $\QStk^{\cg}(X)^\simeq$ is the largest Kan complex contained in $\QStk^{\cg}(X)$. We denote the set of connected components of $\sBr^\dagger(X)$ by $\Br^\dagger(X)$ and refer to it as the \emph{extended Brauer group} of $X$. According to \cite[11.5.3.7]{lurie2018sag}, if $\calA$ is an $\bbE_1$-algebra object of $\QCoh(X)$, then $\calA$ is said to be an \emph{Azumaya algebra on $X$} if, for each connective $\bbE_\infty$-ring $R$ and each point $\eta \in X(R)$ (which we identify with a morphism $\Spec R \rightarrow X$), the pullback $\eta^\ast \calA$ is an Azumaya algebra over $R$ in the sense of \cite[11.5.3.1]{lurie2018sag}. Invoking \cite[11.5.3.9]{lurie2018sag}, we can associate to each Azumaya algebra $\calA$ on $X$ an object of $\sBr^\dagger(X)$, given by the formula $(\eta: \Spec R \rightarrow X) \mapsto (\RMod_{\eta^\ast \calA} \in \LinCat^{\St}_R)$. We denote the connected component of this object by $[\calA]$ and refer to it as the \emph{extended Brauer class} of $\calA$.
\end{pg}	

\begin{pg}
	To prove \ref{twisted equivalences induce equivalences}, we will need a global version of \cite[11.5.3.4]{lurie2018sag}:
\end{pg}

\begin{lemma}\label{Azumaya algebras and invertibility in the global case} 
	Let $X$ be a quasi-geometric stack satisfying condition $(\ast)$ of \emph{\ref{promotion of the global sections functors}}, and let $\calA$ be an $\bbE_1$-algebra object of $\QCoh(X)$. Suppose that $X$ is perfect in the sense of \emph{\cite[9.4.4.1]{lurie2018sag}}. Then $\calA$ is an Azumaya algebra on $X$ if and only if the $\infty$-category $\RMod_{\calA}(\QCoh(X))$ of right $\calA$-module objects of $\QCoh(X)$ is invertible as an object of $\Mod_{\QCoh(X)}(\Pres^{\St})$. 
\end{lemma}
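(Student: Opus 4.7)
The strategy is to realize $\RMod_{\calA}(\QCoh(X))$ as the global sections of an explicit stable quasi-coherent stack on $X$, and thereby reduce the statement to the affine case \cite[11.5.3.4]{lurie2018sag} (together with \ref{Azumaya algebras and invertibility in the nonconnective case}).

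First I would construct a compactly generated stable quasi-coherent stack $\underline{\RMod}_{\calA}$ on $X$, characterized pointwise by the formula $(\eta: \Spec R \rightarrow X) \mapsto \RMod_{\eta^\ast \calA} \in \LinCat^{\St}_R$. Functoriality on points is supplied by the base-change equivalence $\Mod_{R'} \otimes_{\Mod_R} \RMod_{\eta^\ast \calA} \simeq \RMod_{(\eta')^\ast \calA}$ associated to each morphism $\eta \rightarrow \eta'$ over $X$. Next, I would identify its global sections: the left adjoint $\Psi_X$ to $\QCoh(X; \bullet)$ constructed in the proof of \ref{fully faithfulness of the global sections functor} carries $\RMod_{\calA}(\QCoh(X))$ to the stack whose value at $\eta$ is $\Mod_R \otimes_{\QCoh(X)} \RMod_{\calA}(\QCoh(X)) \simeq \RMod_{\eta^\ast \calA}$, so $\Psi_X(\RMod_{\calA}(\QCoh(X))) \simeq \underline{\RMod}_{\calA}$. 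Combined with the full faithfulness part of \ref{fully faithfulness of the global sections functor}, this yields a canonical equivalence $\QCoh(X; \underline{\RMod}_{\calA}) \simeq \RMod_{\calA}(\QCoh(X))$ in $\Mod_{\QCoh(X)}(\Pres^{\St})$.

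For the \emph{if} direction, suppose $\RMod_{\calA}(\QCoh(X))$ is invertible in $\Mod_{\QCoh(X)}(\Pres^{\St})$. For each point $\eta: \Spec R \rightarrow X$, the base-change functor $\Mod_R \otimes_{\QCoh(X)}(-): \Mod_{\QCoh(X)}(\Pres^{\St}) \rightarrow \LinCat^{\St}_R$ is symmetric monoidal and hence preserves invertible objects, so $\RMod_{\eta^\ast \calA}$ is invertible in $\LinCat^{\St}_R$. Applying \cite[11.5.3.4]{lurie2018sag} (and \ref{Azumaya algebras and invertibility in the nonconnective case} when $R$ is nonconnective) shows that $\eta^\ast \calA$ is an Azumaya algebra over $R$; since $\eta$ was arbitrary, $\calA$ is an Azumaya algebra on $X$. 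Conversely, if $\calA$ is Azumaya, then each $\underline{\RMod}_{\calA,\eta} = \RMod_{\eta^\ast \calA}$ is invertible in $\LinCat^{\St}_R$, with inverse $\RMod_{\eta^\ast \calA^{\rev}}$. Pointwise invertibility of a morphism in $\QStk^{\cg}(X)$ is detected stalkwise, so the pair $(\underline{\RMod}_{\calA}, \underline{\RMod}_{\calA^{\rev}})$ assembles into a duality datum in $\QStk^{\cg}(X)$, placing $\underline{\RMod}_{\calA}$ in the extended Brauer space $\sBr^\dagger(X)$. It remains to transport this duality datum along $\QCoh(X; \bullet)$ into $\Mod_{\QCoh(X)}(\Pres^{\St})$; for this I would use the perfectness of $X$ to ensure that the (lax symmetric monoidal) global sections functor is in fact symmetric monoidal when restricted to $\QStk^{\cg}(X)$, so that the invertibility of $\underline{\RMod}_{\calA}$ passes to its global sections.

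\textbf{Main obstacle.} The principal difficulty lies in the only-if direction: promoting pointwise invertibility of $\underline{\RMod}_{\calA}$ in $\QStk^{\cg}(X)$ to invertibility of $\RMod_{\calA}(\QCoh(X))$ in $\Mod_{\QCoh(X)}(\Pres^{\St})$. Full faithfulness of $\QCoh(X; \bullet)$ alone does not preserve tensor products; what is needed is that for compactly generated stable quasi-coherent stacks $\calC$ and $\calD$ on $X$, the canonical map $\QCoh(X; \calC) \otimes_{\QCoh(X)} \QCoh(X; \calD) \rightarrow \QCoh(X; \calC \otimes \calD)$ is an equivalence. This is exactly where perfectness of $X$ (and hence compact generation of $\QCoh(X)$ together with compactness of $\calO_X$) is used in an essential way, via the familiar descent/perfectness arguments of \cite[\textsection 9.4.4]{lurie2018sag}.
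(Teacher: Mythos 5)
Your approach is genuinely different from the paper's, and in its present form it has two substantive gaps.

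The paper works entirely inside $\Mod_{\QCoh(X)}(\Pres^{\St})$: it uses \cite[4.8.4.6, 4.8.4.8]{lurie2017ha} to produce the functor $c: \QCoh(X) \rightarrow \LMod_{\calA}(\QCoh(X)) \otimes_{\QCoh(X)} \RMod_{\calA}(\QCoh(X))$ exhibiting $\LMod_{\calA}(\QCoh(X))$ as a $\QCoh(X)$-linear dual, then applies the Morita criterion of \cite[2.1.3]{lurie2017br} to translate ``$c$ is an equivalence'' into three concrete conditions (dualizability of $\calA$, the map $\calA \otimes \calA^{\rev} \rightarrow \End(\calA)$ being an equivalence, and conservativity of $\calA \otimes -$), each of which is then reduced to the affine case using \cite[6.2.6.2]{lurie2018sag}, \cite[9.1.5.3]{lurie2018sag}, and the compactness of $\calO_X$. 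No global sections functor appears. Your proposal instead tries to recognize $\RMod_{\calA}(\QCoh(X))$ as $\QCoh(X;\underline{\RMod}_{\calA})$ and then transport invertibility through the (lax symmetric monoidal) global sections functor. That route, if it worked, would be more geometric and would directly produce an object of $\sBr^\dagger(X)$, but it trades the elementary checks of the paper for deeper input on $\QCoh(X;\bullet)$.

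Gap 1: the equivalence $\QCoh(X;\underline{\RMod}_{\calA}) \simeq \RMod_{\calA}(\QCoh(X))$ does not follow from what you cite. Theorem~\ref{fully faithfulness of the global sections functor} shows that the counit $\Psi_X \circ \QCoh(X;\bullet) \rightarrow \id$ is an equivalence, so $\QCoh(X;\bullet)$ is a coreflective embedding. Computing $\Psi_X(\RMod_{\calA}(\QCoh(X))) \simeq \underline{\RMod}_{\calA}$ (itself requiring a base-change identification $\Mod_R \otimes_{\QCoh(X)} \RMod_{\calA}(\QCoh(X)) \simeq \RMod_{\eta^\ast\calA}$ that you should justify) only shows that the \emph{unit} map $\RMod_{\calA}(\QCoh(X)) \rightarrow \QCoh(X;\underline{\RMod}_{\calA})$ exists; it is an equivalence precisely when $\RMod_{\calA}(\QCoh(X))$ lies in the essential image of $\QCoh(X;\bullet)$, which you have not established. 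Gap 2: you correctly identify, but explicitly leave unproved, the crux of the ``only if'' direction: that $\QCoh(X;\bullet)$ is (strongly) symmetric monoidal on $\QStk^{\cg}(X)$, i.e.\ that the lax structure map $\QCoh(X;\calC) \otimes_{\QCoh(X)} \QCoh(X;\calD) \rightarrow \QCoh(X;\calC\otimes\calD)$ is an equivalence. Labeling this as ``the main obstacle'' and gesturing at \cite[\textsection 9.4.4]{lurie2018sag} does not resolve it; without a proof of this fact the argument does not close. The paper's approach avoids both issues entirely, which is precisely why it takes the route it does.
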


\begin{proof}
	Since $\QCoh(X)$ is a presentable stable $\infty$-category (see \cite[9.1.3.2]{lurie2018sag} and \cite[6.2.3.4]{lurie2018sag}), so is $\RMod_{\calA}(\QCoh(X))$ by virtue of \cite[4.2.3.7]{lurie2017ha} and \cite[7.1.1.4]{lurie2017ha}. Let $\LMod_{\calA}(\QCoh(X))$ denote the $\infty$-category of left $\calA$-module objects of $\QCoh(X)$. According to \cite[4.8.4.6]{lurie2017ha}, the tensor product $\LMod_{\calA}(\QCoh(X)) \otimes_{\QCoh(X)}\RMod_{\calA}(\QCoh(X))$ can be identified with the $\infty$-category $\BMod{\calA}{\calA}(\QCoh(X))$ of $\calA$-$\calA$-bimodule objects of $\QCoh(X)$; see \cite[4.3.1.12]{lurie2017ha}. It then follows from \cite[4.8.4.8]{lurie2017ha} that the unit object $\calA \in \BMod{\calA}{\calA}(\QCoh(X))$ determines a $\QCoh(X)$-linear functor 
$$
c: \QCoh(X) \rightarrow \LMod_{\calA}(\QCoh(X)) \otimes_{\QCoh(X)}\RMod_{\calA}(\QCoh(X)),
$$
which exhibits $\LMod_{\calA}(\QCoh(X))$ as a $\QCoh(X)$-linear dual of $\RMod_{\calA}(\QCoh(X))$, so that $\RMod_{\calA}(\QCoh(X))$ is invertible in $\Mod_{\QCoh(X)}(\Pres^{\St})$ if and only if the functor $c$ is an equivalence. Applying \cite[2.1.3]{lurie2017br}, we deduce that the latter condition holds if and only if the following conditions are satisfied: 
\begin{enumerate}[(i)]
\item The sheaf $\calA$ is dualizable as an object of $\QCoh(X)$.
\item The map $\calA \otimes \calA^{\rev} \rightarrow \End(\calA)$ induced by the left and right actions of $\calA$ on itself is an equivalence (here $\calA^{\rev}$ denotes the opposite algebra of $\calA$; see \cite[4.1.1.7]{lurie2017ha}). 
\item The functor $\QCoh(X) \rightarrow \QCoh(X)$ given on objects by the formula $\calF \mapsto \calA \otimes \calF$ is conservative. 
\end{enumerate}
Using \cite[6.2.6.2]{lurie2018sag}, we see that the first condition holds if and only if, for every connective $\bbE_\infty$-ring $R$ and every $R$-valued point $\eta \in X(R)$, the $R$-module spectrum $\calA_\eta$ is dualizable. Note that since $X$ is a perfect stack which satisfies condition $(\ast)$ of \ref{promotion of the global sections functors}, for every $R$-point $\eta: \Spec R \rightarrow X$, the canonical map $\eta^\ast \End(\calA) \rightarrow \End(\eta^\ast \calA)$ is an equivalence of $\bbE_1$-algebras over $R$ provided that $\calA$ is compact (or equivalently, dualizable; see \cite[9.1.5.3]{lurie2018sag} and \cite[6.2.6.2]{lurie2018sag}). Using these observations and applying \cite[2.1.3]{lurie2017br} again, we can reduce to the case where $X$ is affine, in which case the desired result follows from \cite[11.5.3.4]{lurie2018sag}. 
\end{proof}

\begin{pg}
	We are now ready to prove our main result \ref{twisted equivalences induce equivalences}:
\end{pg}

\begin{proof}[Proof of \emph{\ref{twisted equivalences induce equivalences}}]
	We first note that $X$ and $Y$ satisfy condition $(\ast)$ of \ref{promotion of the global sections functors} and that the pullback functor $f^\ast: \QCoh(Y) \rightarrow \QCoh(X)$ satisfies the hypotheses of \ref{pushforward functors and Frobenius algebra objects} (see \ref{examples for pushforward functors and Frobenius algebra objects} and \ref{perfect spectral algebraic stacks over the sphere spectrum}), so that it makes sense to consider the twisted equivalence $F$. Since $X$ and $Y$ are quasi-geometric spectral algebraic stacks which admit a quasi-finite presentation (in the sense of \cite[1.3]{chough_brauer}), we deduce from \cite[1.2]{chough_brauer} that there exist Azumaya algebras $\calA$ and $\calB$ on $X$ and $Y$ (in the sense of \cite[11.5.3.7]{lurie2018sag}) for which the extended Brauer classes $[\calA]$ and $[\calB]$ (in the sense of \cite[11.5.3.9]{lurie2018sag}) are equal to the connected components of $\calC$ and $\calD$ in the extended Brauer groups $\Br^\dagger(X)$ and $\Br^\dagger(Y)$ of \cite[11.5.2.1]{lurie2018sag}, respectively. Consequently, we can identify $\QCoh(X; \calC)$ and $\QCoh(Y; \calD)$ with $\RMod_{\calA}(\QCoh(X))$ and $\RMod_{\calB}(\QCoh(Y))$ in $\Mod_{\QCoh(X)}(\Pres^{\St})$ and $\Mod_{\QCoh(Y)}(\Pres^{\St})$, respectively. Our assumption that $F$ is a twisted equivalence supplies a commutative diagram
$$
\Pull{\QCoh(Y)}{\LMod_{\calB}(\QCoh(Y)) \otimes_{\QCoh(Y)} \RMod_{\calB}(\QCoh(Y))}{\QCoh(X)}{\LMod_{\calA}(\QCoh(X)) \otimes_{\QCoh(X)} \RMod_{\calA}(\QCoh(X)),}{}{f^\ast}{}{}
$$
where the horizontal maps exhibit $\LMod_{\calB}(\QCoh(Y))$ and $\LMod_{\calA}(\QCoh(X))$ as duals of $\RMod_{\calB}(\QCoh(Y))$ and $\RMod_{\calA}(\QCoh(X))$ in $\Mod_{\QCoh(Y)}(\Pres^{\St})$ and $\Mod_{\QCoh(X)}(\Pres^{\St})$, respectively. Using \ref{Azumaya algebras and invertibility in the global case}, we see that the horizontal maps are equivalences. Combining this with our assumption that $F$ is a twisted equivalence (which guarantees that the right vertical map is an equivalence), we deduce that the left vertical map $f^\ast: \QCoh(Y) \rightarrow \QCoh(X)$ is also an equivalence. Then \cite[9.2.2.1]{lurie2018sag} guarantees that the map $f$ is an equivalence, thereby completing the proof of the first assertion. To prove the second, we note that since $f$ is an equivalence, the twisted equivalence $F$ induces an equivalence $\QCoh(X; f^\ast \calD) \simeq \QCoh(X; \calC)$ of $\QCoh(X)$-module objects of $\Pres^{\St}$, so that we obtain an equivalence $f^\ast \calD \simeq \calC$ of stable quasi-coherent stacks on $X$ by virtue of \ref{fully faithfulness of the global sections functor}. 
\end{proof}

\section{Twisted Equivalences for $\bbE_\infty$-rings}

	The goal of this section is to prove \ref{twisted equivalences in the case of nonconnective affines}, an analogue of \cite[9.2]{MR1002456} in the setting of $\bbE_\infty$-rings. 

\begin{pg}
	If $A$ is an ordinary associative ring, we let $\DD^b(A)$ denote the bounded derived category of $A$-modules. Recall that the \emph{center} of $A$, which we denote by $\mathrm{Z}(A)$, is defined as the subalgebra consisting of those elements $x \in A$ for which $xy=yx$ for every $y \in A$. Let $A$ and $A'$ be ordinary associative rings and suppose we are given an equivalence of triangulated categories $\DD^b(A) \simeq \DD^b(A')$. It then follows from \cite[9.2]{MR1002456} of Jeremy Rickard that there is an isomorphism of algebras $\mathrm{Z}(A) \simeq \mathrm{Z}(A')$. To establish a spectral analogue of this result, we first need to adapt the definition of centers to the setting of $\bbE_\infty$-rings. To this end, let $R$ be an $\bbE_\infty$-ring. According to \cite[5.3.1.12]{lurie2017ha}, we can associate to each $\bbE_1$-algebra $A$ over $R$ an $\bbE_2$-algebra over $R$, which we denote by $\Z(A)$ and refer to as the \emph{center} of $A$. More informally, $\Z(A)$ is an $\bbE_2$-algebra over $R$ which is universal among those $\bbE_2$-algebras $B$ over $R$ for which $A$ can be promoted to an associative algebra object of $\LMod_B$; see \cite[5.3.1.14]{lurie2017ha}. Let $\BMod{A}{A}(\Mod_R)$ denote the $\infty$-category of $A$-$A$-bimodule objects of $\Mod_R$, which we regard as an $\infty$-category tensored over $\Mod_R$; see \cite[4.3.1.12]{lurie2017ha}. It then follows from \cite[5.3.1.30]{lurie2017ha} and \cite[4.4.1.28]{lurie2017ha} that the underlying $\bbE_1$-algebra of $\Z(A)$ over $R$ can be identified with a classifying object for endomorphisms of $A$ as an object of $\BMod{A}{A}(\Mod_R)$. 
\end{pg}

\begin{remark}
	The notation $\Z(A)$ is somewhat abusive, since it depends on $R$ in the following sense: if $R' \rightarrow R$ is a morphism of $\bbE_\infty$-rings, then it is not necessarily true that the induced map $\Z(\underline{A}) \rightarrow \underline{\Z(A)}$ is an equivalence of $\bbE_2$-algebras over $R'$, where $\underline{A}$ and $\underline{\Z(A)}$ denote the underlying $\bbE_1$-algebra of $A$ over $R'$ and the underlying $\bbE_2$-algebra of $\Z(A)$ over $R'$, respectively (see \ref{the centers and the forgetful functors}).  
\end{remark}

\begin{remark}
	Let $A$ be an ordinary associative ring, which we regard as an ordinary associative algebra over the ordinary commutating ring of integers $\bbZ$. Let us regard $\bbZ$ as a discrete $\bbE_\infty$-ring, so that we can regard $A$ as a discrete $\bbE_1$-algebra over $\bbZ$ (see \cite[7.1.3.18]{lurie2017ha}). Then there are two different ways to define the notion of a center of $A$: the ordinary center $\mathrm{Z}(A)$ (where we regard $A$ as an ordinary associative ring) and the center $\Z(A)$ (where we regard $A$ as a discrete $\bbE_1$-algebra over $\bbZ$). The latter is generally not discrete (unlike the former) and has a rich structure: for example, we can identify the homotopy groups $\pi_n \Z(A)$ with the \emph{$n$th Hochschild cohomology groups} of $A$. In particular, we have an isomorphism $\pi_0\Z(A) \simeq \mathrm{Z}(A)$.
\end{remark}

\begin{pg}\label{setup for twisted equivalences for nonconnective affines} 
	In the formulation of \ref{twisted equivalences in the case of nonconnective affines}, we make use of the observation that the definition of twisted equivalences of \ref{twisted equivalences} can be extended to the nonconnective setting. In the special case of affine nonconnective spectral Deligne-Mumford stacks (see \cite[\textsection 1.4.7]{lurie2018sag}), it can be described explicitly as follows: suppose we are given a map of (not necessarily connective) $\bbE_\infty$-rings $f: R' \rightarrow R$. Combining \cite[7.2.4.2]{lurie2017ha}, \cite[4.4.2.15]{lurie2017ha}, and \cite[7.2.4.4]{lurie2017ha}, we see that the symmetric monoidal stable $\infty$-categories $\Mod_{R'}$ and $\Mod_R$ are locally rigid (see \cite[D.7.4.1]{lurie2018sag}). Using \ref{examples for pushforward functors and Frobenius algebra objects}, we deduce that the symmetric monoidal functor $f^\ast: \Mod_{R'} \rightarrow \Mod_R$ (which carries an $R'$-module spectrum $M'$ to the relative tensor product $R\otimes_{R'}M'$) satisfies the hypotheses of \ref{pushforward functors and Frobenius algebra objects}. Now let $A'$ and $A$ be $\bbE_1$-algebras over $R'$ and $R$, respectively, and let $F:\RMod_{A'} \rightarrow \RMod_A$ be an equivalence of stable $R'$-linear $\infty$-categories (where we regard $\RMod_A$ as a stable $R'$-linear $\infty$-category via the extension of scalars functor $f^\ast$). Note that \cite[4.8.4.6]{lurie2017ha} supplies an equivalence of $R'$-linear $\infty$-categories $\LMod_{A'} \otimes_{R'} \RMod_{A'} \simeq \BMod{A'}{A'}(\Mod_{R'})$, where $\BMod{A'}{A'}(\Mod_{R'})$ denotes the $\infty$-category of $A'$-$A'$-bimodule objects of $\Mod_{R'}$. Using this equivalence and \cite[4.8.4.8]{lurie2017ha}, we see that the $R'$-linear functor $c_{A'}: \Mod_{R'} \rightarrow \LMod_{A'} \otimes_{R'} \RMod_{A'}$ determined by the unit object $A' \in \BMod{A'}{A'}(\Mod_{R'})$ is a duality datum in the symmetric monoidal $\infty$-category $\LinCat^{\St}_{R'}$ of \cite[D.1.5.1]{lurie2018sag}. Similarly, the $R$-linear functor $c_A: \Mod_R \rightarrow \LMod_A\otimes_R \RMod_A$ determined by $A \in \BMod{A}{A}(\Mod_R)$ is a duality datum in $\LinCat^{\St}_R$. Moreover, \ref{restriction of a duality datum} guarantees that $\RMod_A \in \LinCat^{\St}_R$ is also dualizable when regarded as a stable $R'$-linear $\infty$-category via the functor $f^\ast$, and its dual can be identified with $\LMod_A$ viewed as an object of $\LinCat^{\St}_{R'}$ via the functor $f^\ast$. Consequently, the functor $F$ induces an equivalence of $\infty$-categories $\LMod_{A'} \otimes_{R'} \RMod_{A'} \rightarrow \LMod_A \otimes_{R'} \RMod_A$. Composing with the canonical map $\LMod_A \otimes_{R'} \RMod_A \rightarrow \LMod_A \otimes_R \RMod_A$, we obtain a functor
$$
\LMod_{A'} \otimes_{R'} \RMod_{A'} \rightarrow \LMod_A \otimes_R \RMod_A. 
$$
We will say that the functor $F:\RMod_{A'} \rightarrow \RMod_A$ is a \emph{twisted equivalence} if this composite functor is an equivalence of $\infty$-categories fitting into a commutative diagram
$$
\Pull{\Mod_{R'}}{\LMod_{A'} \otimes_{R'} \RMod_{A'}}{\Mod_R}{\LMod_A \otimes_R \RMod_A.}{c_{A'}}{f^\ast}{}{c_A}
$$
\end{pg}

\begin{pg}
	Let $k$ be a nonnegative integer. If $R$ is an $\bbE_\infty$-ring, we will denote the $\infty$-category $\Alg_{\bbE_k}(\Mod_R)$ of $\bbE_k$-algebra objects of $\Mod_R$ by $\Alg^{(k)}_R$; see \cite[7.1.3.5]{lurie2017ha}. If $k=1$, we will denote $\Alg^{(k)}_R$ simply by $\Alg_R$. 

	We are now ready to give the proof of \ref{twisted equivalences in the case of nonconnective affines}:
\end{pg}

\begin{proof}[Proof of \emph{\ref{twisted equivalences in the case of nonconnective affines}}]
	Using \cite[5.3.1.30]{lurie2017ha} and \cite[4.4.1.28]{lurie2017ha}, we see that the underlying $\bbE_1$-algebra of $\Z(A)$ over $R$ can be identified with a classifying object for endomorphisms of $A$ as an object of the $R$-linear $\infty$-category $\BMod{A}{A}(\Mod_R)$, which we denote by $\End_{\BMod{A}{A}(\Mod_R)}(A)$ (and similarly for $\Z(A')$). Since the forgetful functor $\Alg^{(2)}_{R'} \rightarrow \Alg_{R'}$ is conservative (see \cite[3.2.2.6]{lurie2017ha} and \cite[5.1.2.2]{lurie2017ha}), it will suffice to show that the induced map $\End_{\BMod{A'}{A'}(\Mod_{R'})}(A') \rightarrow \End_{\BMod{A}{A}(\Mod_R)}(A)$ is an equivalence of $\bbE_1$-algebras over $R'$, where we regard the latter classifying object as an $\bbE_1$-algebra over $R'$ via the map $f: R' \rightarrow R$. This follows immediately from the commutativity of the diagram appearing in \ref{setup for twisted equivalences for nonconnective affines}. 
\end{proof}

\begin{corollary}\label{twisted equivalences for Azumaya algebras in the case of nonconnective affines}
	In the situation of \emph{\ref{twisted equivalences in the case of nonconnective affines}}, suppose that $A'$ and $A$ are Azumaya algebras over $R'$ and $R$, respectively. Then $f$ is an equivalence.
\end{corollary}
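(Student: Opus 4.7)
The plan is to apply \ref{twisted equivalences in the case of nonconnective affines} and then to identify the centers of the Azumaya algebras $A'$ and $A$ with their base rings $R'$ and $R$, respectively.

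First, I would establish the following auxiliary claim: for any $\bbE_\infty$-ring $S$ and any Azumaya algebra $B$ over $S$, the canonical unit map $\eta_B: S \to \Z(B)$ is an equivalence of $\bbE_2$-algebras over $S$. Since the forgetful functor $\Alg^{(2)}_S \to \Alg_S$ is conservative (see \cite[3.2.2.6]{lurie2017ha} and \cite[5.1.2.2]{lurie2017ha}), this reduces to verifying the corresponding statement at the level of the underlying $\bbE_1$-algebras. Using the identification $\Z(B) \simeq \End_{\BMod{B}{B}(\Mod_S)}(B)$ supplied by \cite[5.3.1.30]{lurie2017ha} and \cite[4.4.1.28]{lurie2017ha}, the defining Azumaya equivalence $B \otimes_S B^{\rev} \simeq \End_S(B)$ identifies $\BMod{B}{B}(\Mod_S)$ with $\LMod_{\End_S(B)}(\Mod_S)$; Morita theory, valid because $B$ is a compact generator of $\Mod_S$, further identifies the latter with $\Mod_S$ in a manner carrying the canonical bimodule $B$ to the unit $S \in \Mod_S$. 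The calculation $\End_{\BMod{B}{B}(\Mod_S)}(B) \simeq \End_{\Mod_S}(S) \simeq S$ then yields the claim, modulo a check that this equivalence agrees with $\eta_B$.

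Granting this claim, I would apply it to $A'$ and $A$ to obtain unit equivalences $\eta_{A'}: R' \simeq \Z(A')$ and $\eta_A: R \simeq \Z(A)$. By \ref{twisted equivalences in the case of nonconnective affines}, the twisted equivalence $F$ induces an equivalence $\alpha: \Z(A') \to \Z(A)$ of $\bbE_2$-algebras over $R'$, where $\Z(A)$ is regarded as an $\bbE_2$-algebra over $R'$ via $f$. These data assemble into a commutative square
$$
\Pull{R'}{R}{\Z(A')}{\Z(A)}{f}{\eta_{A'}}{\eta_A}{\alpha}
$$
of $\bbE_2$-algebras over $R'$, whose commutativity reflects the $R'$-linearity of $\alpha$: the composite $\alpha \circ \eta_{A'}$ must coincide with the canonical $R'$-algebra structure map on $\Z(A)$, which is by construction $\eta_A \circ f$. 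Since the two vertical maps and the bottom horizontal map are equivalences, so is $f$.

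The main obstacle lies in the first step, namely tracking the bimodule $B$ through the chain of equivalences $\BMod{B}{B}(\Mod_S) \simeq \LMod_{\End_S(B)}(\Mod_S) \simeq \Mod_S$ and verifying that the resulting identification $S \simeq \End_{\BMod{B}{B}(\Mod_S)}(B)$ coincides with the universal unit map $\eta_B$ rather than some twisted variant. This is expected to be an essentially formal exercise in $\infty$-categorical Morita theory, but requires careful bookkeeping of the compatibilities.
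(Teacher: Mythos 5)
Your overall strategy matches the paper's: reduce to the claim that for an Azumaya algebra $B$ over $S$ the unit map $S \to \Z(B)$ is an equivalence, then apply \ref{twisted equivalences in the case of nonconnective affines} and the commutativity of a square over $R'$ to conclude that $f$ is an equivalence. Where you diverge is in how the auxiliary claim is established. The paper invokes \ref{Azumaya algebras and invertibility in the nonconnective case} to conclude that $\RMod_B$ is invertible in $\LinCat^{\St}_S$; since $c_B \colon \Mod_S \to \LMod_B \otimes_S \RMod_B$ is a duality datum for an invertible object, it is automatically an equivalence, and because $c_B$ carries the unit to the bimodule $B$ the identification $\End_{\BMod{B}{B}(\Mod_S)}(B) \simeq \End_{\Mod_S}(\mathbf{1}_{\Mod_S}) \simeq S$ is immediate and \emph{visibly} compatible with the unit map---$c_B$ is the very functor appearing in the defining square of a twisted equivalence in \ref{setup for twisted equivalences for nonconnective affines}, so the compatibility is built in. You instead re-derive this via the Morita chain $\BMod{B}{B}(\Mod_S) \simeq \LMod_{B\otimes_S B^{\rev}}(\Mod_S) \simeq \LMod_{\End_S(B)}(\Mod_S) \simeq \Mod_S$, which is correct in substance but forces you to check by hand that the composite identification agrees with $\eta_B$---exactly the ``bookkeeping'' obstacle you flag. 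The duality-datum route sidesteps that check entirely, and the final commuting square you construct is likewise already encoded in the definition of a twisted equivalence rather than something to be reassembled. So the argument is right and structurally the same; citing the invertibility lemma and reusing $c_B$ is what makes the step you worried about automatic.
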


\begin{proof}
	Since $A \in \Alg_R$ is an Azumaya algebra over $R$, it follows from \ref{Azumaya algebras and invertibility in the nonconnective case} that $\RMod_A$ is invertible as an object of $\LinCat^{\St}_R$, so that the lower horizontal map $c_A$ in the diagram appearing in \ref{setup for twisted equivalences for nonconnective affines} is an equivalence of $\infty$-categories. Consequently, the unit map $R \rightarrow \Z(A)$ is an equivalence of $\bbE_2$-algebras over $R$. Similarly, we can identify $\Z(A')$ with $R'$ via the unit map $R' \rightarrow \Z(A')$. The desired result now follows from \ref{twisted equivalences in the case of nonconnective affines}. 
\end{proof}

\bibliography{chough_twisted}

\providecommand{\bysame}{\leavevmode\hbox to3em{\hrulefill}\thinspace}
\providecommand{\MR}{\relax\ifhmode\unskip\space\fi MR }
\providecommand{\MRhref}[2]{%
  \href{http://www.ams.org/mathscinet-getitem?mr=#1}{#2}
}
\providecommand{\href}[2]{#2}
\begin{thebibliography}{10}

\bibitem{MR3466552}
Benjamin Antieau, \emph{A reconstruction theorem for abelian categories of
  twisted sheaves}, J. Reine Angew. Math. \textbf{712} (2016), 175--188.
  \MR{3466552}

\bibitem{MR3616004}
\bysame, \emph{Twisted derived equivalence for affine schemes}, Brauer groups
  and obstruction problems, Progr. Math., vol. 320, Birkh\"{a}user/Springer,
  Cham, 2017, pp.~7--12. \MR{3616004}

\bibitem{MR3190610}
Benjamin Antieau and David Gepner, \emph{Brauer groups and \'{e}tale cohomology
  in derived algebraic geometry}, Geom. Topol. \textbf{18} (2014), no.~2,
  1149--1244. \MR{3190610}

\bibitem{bokstedt1985thh}
Marcel B\"{o}kstedt, \emph{The topological hochschild homology of $\mathbb{Z}$
  and $\mathbb{Z}/p$}, 1985, Preprint.

\bibitem{MR3322195}
John Calabrese and Michael Groechenig, \emph{Moduli problems in abelian
  categories and the reconstruction theorem}, Algebr. Geom. \textbf{2} (2015),
  no.~1, 1--18. \MR{3322195}

\bibitem{MR2700538}
Andrei~Horia Caldararu, \emph{Derived categories of twisted sheaves on
  {C}alabi-{Y}au manifolds}, ProQuest LLC, Ann Arbor, MI, 2000, Thesis
  (Ph.D.)--Cornell University. \MR{2700538}

\bibitem{chough_brauer}
Chang-Yeon Chough, \emph{Brauer spaces of spectral algebraic stacks}, 2020,
  Preprint.

\bibitem{MR1887894}
Andrei C\u{a}ld\u{a}raru, \emph{Derived categories of twisted sheaves on
  elliptic threefolds}, J. Reine Angew. Math. \textbf{544} (2002), 161--179.
  \MR{1887894}

\bibitem{MR1608798}
Alexander Grothendieck, \emph{Le groupe de {B}rauer. {I}. {A}lg\`ebres
  d'{A}zumaya et interpr\'{e}tations diverses [ {MR}0244269 (39 \#5586a)]},
  S\'{e}minaire {B}ourbaki, {V}ol. 9, Soc. Math. France, Paris, 1995, pp.~Exp.
  No. 290, 199--219. \MR{1608798}

\bibitem{MR2522659}
Jacob Lurie, \emph{Higher topos theory}, Annals of Mathematics Studies, vol.
  170, Princeton University Press, Princeton, NJ, 2009. \MR{2522659}

\bibitem{lurie2018sag}
\bysame, \emph{Spectral algebraic geometry}, Last update: Feb 2018, Preprint.

\bibitem{lurie2017br}
\bysame, \emph{On brauer groups of lubin-tate spectra i.}, Last update: Mar
  2017, Preprint.

\bibitem{lurie2017ha}
\bysame, \emph{Higher algebra}, Last update: Sep 2017, Preprint.

\bibitem{MR1002456}
Jeremy Rickard, \emph{Morita theory for derived categories}, J. London Math.
  Soc. (2) \textbf{39} (1989), no.~3, 436--456. \MR{1002456}

\end{thebibliography}
\bibliographystyle{amsplain}

\end{document}